 \newtheorem{thm}{Theorem}[section]
 \newtheorem{corollary}[thm]{Corollary}
 \newtheorem{lemma}[thm]{Lemma}
 \newtheorem{Proposition}[thm]{Proposition}
 \theoremstyle{definition}
 \theoremstyle{remark}
 \newtheorem{remark}[thm]{Remark}
 \numberwithin{equation}{section}
 \newcommand{\R}{\mathbb{R}}
   \newcommand{\Z}{\mathbb{Z}}
\begin{document}

%
%

\title[Centroaffine Duality for Polygons in $3$-Space]
 {{Centroaffine Duality for Spatial Polygons}}

\author[M.Craizer]{Marcos Craizer}

\address{%
Departamento de Matem\'{a}tica- PUC-Rio\br
Rio de Janeiro, RJ, Brasil}
\email{craizer@puc-rio.br}

\author[S.Pesco]{Sinesio Pesco}

\address{%
Departamento de Matem\'{a}tica- PUC-Rio\br
Rio de Janeiro, RJ, Brasil}
\email{sinesio@puc-rio.br}



\thanks{The authors want to thank CNPq and CAPES for financial support during the preparation of this manuscript. \newline E-mail of the corresponding author: craizer@puc-rio.br}

\subjclass{ 53A15, 52C35}

\keywords{Affine vertices, Planar points, Flattening points, Support points, Four vertices theorem, Four planar points theorem.}

\date{November 10, 2018}

\begin{abstract}
In this paper, we discuss centroaffine geometry of polygons in $3$-space. For a polygon $X$ that 
is locally convex with respect to an origin together with a transversal vector field $U$, we define 
the centroaffine dual pair $(Y,V)$ similarly to \cite{Nomizu2}. We prove that vertices of $(X,U)$ 
correspond to flattening points for $(Y,V)$ and also that constant curvature polygons are dual to planar polygons. 
As an application, we give a new proof of a known $4$ flattening points theorem for spatial polygons.
\end{abstract}

\maketitle

\section{Introduction}

Affine differential geometry of curves in $3$-space studies differential concepts that are invariant under the affine group. When a distinguished 
origin is fixed, the study of such curves becomes part of the centroaffine differential geometry. 
We shall consider in this paper centroaffine concepts of spatial polygons that comes from discretizations of differential concepts,
and so our results can be classified into discrete differential centroaffine geometry. 
 
We shall consider spatial polygons $X(i)$ that are locally convex with respect to an origin $O\in\R^3$, which means that 
the determinant
\begin{equation*}\label{eq:alpha}
\left[ X(i-1)-O,X(i)-O,X(i+1)-O   \right]
\end{equation*}
does not changes sign, together with transversal vector fields $U(i)$, which means that 
\begin{equation*}\label{eq:beta}
\left[  X(i)-O,X(i+1)-O, U(i)  \right]
\end{equation*}
also does not change sign. The {\it (centroaffine) normal plane} is the plane generated by $X$ and $U$, while the
{\it (centroaffine) focal set} is the envelope of normal planes (\cite{Craizer-Pesco}). We show that the focal set
reduces to a line if and only if the pair $(X,U)$ has constant {\it curvature}.

We define the dual pair $(Y,V)$ of $(X,U)$, which is also a locally convex spatial polygon $Y$ together
with a transversal vector field $V$. This definition is a discrete counterpart of the centroaffine duality introduced in \cite{Nomizu2} for general
smooth codimension $2$ centroaffine immersions (see also \cite{Craizer-Garcia}, where the 
smooth spatial curves case is more explicit). 
We prove many properties of this duality, among them a correspondence between {\it vertices} of $(X,U)$ and {\it flattening points} of the dual pair $(Y,V)$. 
We show also that $(X,U)$ has constant curvature if and only if $(Y,V)$ is planar. As a consequence, we describe explicitly 
the constant curvature pairs. 

Equal-volume polygons are the discrete counterpart of curves parameterized by centroaffine arc-length (\cite{Craizer-Pesco}). 
Such polygons admit natural transversal vector fields that are parallel and unimodular. By using the duality tools, 
we give a characterization of the constant curvature equal-volume polygons. 

As an application of this centroaffine duality theory, we give another proof of a known $4$ flattening points theorem for polygons in $3$-space, by assuming that 
some radial projection is convex. This is a discrete counterpart of a theorem of Arnold for spatial smooth curves (\cite{Arnold}).

The paper is organized as follows: In section 2 we give the basic definitions and properties of centroaffine geometry of spatial polygons.
In section 3 we introduce the duality and prove its mais properties. In section 4 we characterize the constant curvature polygons, while
in section 5 we prove the above mentioned $4$ flattening points theorem. 

\section{Centroaffine Geometry of Polygons}

\subsection{Notation and terminology}

In this paper, we consider polygons in $3$-space. In order to avoid misunderstandings, 
we use the terms "edge" and "nodes" for the elements of a polygon, leaving 
the terms "vertex" and "flattening point" for special edges or nodes. 

We denote by $\Z$ the set of integers and by $\Z^*$ the set $\Z+\frac{1}{2}$. A discrete function is a map from $\Z$ or $\Z^*$ to $\R$, while a 
discrete vector field is a map from $\Z$ or $\Z^*$ to $\R^3$. For any discrete function $g(i)$, we shall use the notation 
$$
g'(i+\tfrac{1}{2})=g(i+1)-g(i),\ \ g''(i)=g'(i+\tfrac{1}{2})-g'(i-\tfrac{1}{2}), \ \
$$
and so on. For periodic functions of period $n$, we shall use the periodic notation $X(i+kn)=X(i)$, $k\in\Z$.
We shall denote by $\left[ \cdot, \cdot,\cdot\right]$ the standard volume in $\R^3$. 

\subsection{Locally convex polygons and transversal parallel vector fields}

Consider a closed spatial polygon $X$ with nodes $X(i)$, $1\leq i\leq n$, and $O\in\R^3$.
Let $\alpha=\alpha(X)$ be defined by 
\begin{equation*}\label{eq:alpha}
\alpha(i)=\left[ X(i-1)-O,X(i)-O,X(i+1)-O   \right].
\end{equation*}
We say that $X$ is {\it locally convex} with respect to $O$ if $\alpha(i)>0$, for any $i\in\Z$. 
Consider a vector field $U$ along $X$ given by $U(i)\in\R^3$, $1\leq i\leq n$. 
Define $\beta=\beta(X,U)$ by 
\begin{equation*}\label{eq:beta}
\beta(i+\tfrac{1}{2})=\left[  X(i)-O,X(i+1)-O, U(i)  \right].
\end{equation*}
We say that $U$ is {\it transversal} to $X$ with respect to $O$ if $\beta(i+\tfrac{1}{2})>0$, for any $i\in\Z$.
Along this paper, we shall consider $O$ to be the origin. 

A transversal vector field $U$ to $X$ is called {\it parallel} if 
\begin{equation}\label{eq:Defineb}
U'(i+\tfrac{1}{2})= - b(i+\tfrac{1}{2}) X'(i+\tfrac{1}{2}),
\end{equation}
for certain scalar function $b=b(X,U)$. We call $b(i+\tfrac{1}{2})$ the {\it curvature} of the edge $(i+\tfrac{1}{2})$.

\subsection{Affine focal sets and vertices}\label{sec:NormalLines}

For a locally convex spatial polygon $X$ with a transversal vector field $U$, the line
\begin{equation*}\label{eq:NormalLine}
X(i)+tU(i),\ \ t\in\R.
\end{equation*}
is called the {\it normal line} at $i$. The plane generated by $X$ and $U$ is called the {\it (centroaffine) normal plane}. 

\begin{lemma}\label{lemma:Ubar}
Consider a transversal vector field $\bar{U}$. Then $\bar{U}$ is a parallel vector field contained in the normal plane of the pair $(X,U)$ if and only if we can write
\begin{equation}\label{eq:UNormalPlane}
\bar{U}=cX+dU,
\end{equation}
for some constants $c$ and $d$, $d\neq 0$. 
\end{lemma}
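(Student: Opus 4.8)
\emph{Proof plan.} The plan is to prove the two implications separately; the direction ($\Leftarrow$) is a short verification, while the converse carries the actual content. Throughout I use that $U$ is a parallel transversal vector field, so that \eqref{eq:Defineb} holds for the pair $(X,U)$.

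For ($\Leftarrow$), suppose $\bar U = cX + dU$ with $c,d$ constant and $d\neq 0$. Then $\bar U(i)$ lies in the linear span of $X(i)$ and $U(i)$, which is the normal plane. Transversality follows from
\[
[X(i),X(i+1),\bar U(i)] = d\,[X(i),X(i+1),U(i)] = d\,\beta(i+\tfrac12),
\]
which is nonzero and of constant sign. Finally, differencing and substituting \eqref{eq:Defineb} gives $\bar U'(i+\tfrac12) = c\,X'(i+\tfrac12) + d\,U'(i+\tfrac12) = \big(c - d\,b(i+\tfrac12)\big)X'(i+\tfrac12)$, so $\bar U$ is parallel, with curvature $\bar b(i+\tfrac12) = d\,b(i+\tfrac12) - c$.

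For ($\Rightarrow$), assume $\bar U$ is transversal, parallel and contained in the normal plane. Since $\beta(i+\tfrac12)\neq 0$ the vectors $X(i)$ and $U(i)$ are linearly independent, so membership in the normal plane yields uniquely determined discrete functions $c(i),d(i)$ with $\bar U(i) = c(i)X(i) + d(i)U(i)$, and transversality of $\bar U$ forces $d(i)\,\beta(i+\tfrac12) = [X(i),X(i+1),\bar U(i)] \neq 0$, hence $d(i)\neq 0$ for all $i$. I would then compute $\bar U'(i+\tfrac12)$ with the discrete product rule, use \eqref{eq:Defineb} to eliminate $U'(i+\tfrac12)$, and rewrite $X'(i+\tfrac12) = X(i+1)-X(i)$, obtaining an expansion of the form $A(i)\,X(i) + B(i)\,X(i+1) + d'(i+\tfrac12)\,U(i)$. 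The parallel hypothesis says this equals $-\bar b(i+\tfrac12)\big(X(i+1)-X(i)\big)$, which has no $U(i)$ component.

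The crux — and the only place the hypotheses are really used — is that $X(i),X(i+1),U(i)$ form a basis of $\R^3$, precisely because $[X(i),X(i+1),U(i)] = \beta(i+\tfrac12)\neq 0$. Matching the $U(i)$-coefficients forces $d'(i+\tfrac12)=0$, so $d$ is a nonzero constant; adding the coefficient equations for $X(i)$ and $X(i+1)$ forces $c'(i+\tfrac12)=0$, so $c$ is constant as well, and therefore $\bar U = cX + dU$. I do not expect a genuine obstacle here: the main thing to watch is the shifted-index bookkeeping in the discrete Leibniz rule (for instance $c'(i+\tfrac12) - c(i+1) = -c(i)$), and, if one insists on the normalization $\beta>0$ for the dual pair rather than merely constant sign, the sign of the constant $d$.
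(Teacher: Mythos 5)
Your proof is correct and takes essentially the same route as the paper's: decompose $\bar{U}(i)=c(i)X(i)+d(i)U(i)$ in the normal plane and use the parallelism of $\bar{U}$ to force $c$ and $d$ to be constant. The paper's own proof is a two-line sketch of exactly this argument; your coefficient-matching in the basis $X(i),X(i+1),U(i)$ merely supplies the details (including $d\neq 0$ from transversality) that it omits.
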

\begin{proof}
If $\bar{U}$ is of the form \eqref{eq:UNormalPlane}, then clearly $\bar{U}$ is parallel and belongs to the normal plane. Conversely, 
if $\bar{U}$ belongs to the normal plane, then we can write $\bar{U}=cX+dU$. Since $\bar{U}$ is also parallel, $c$ and $d$ must be constants, thus
proving the lemma.
\end{proof}

The normal lines at $i$ and $i+1$ meet at
\begin{equation}\label{eq:NormalMeet}
E(i+\tfrac{1}{2})=X(i)+b^{-1}(i+\tfrac{1}{2})U(i)=X(i+1)+b^{-1}(i+\tfrac{1}{2})U(i+1).
\end{equation}

The {\it (centroaffine) focal set} is the envelope of the normal planes, i.e., the conical polyhedron with center $O$ over the polygon whose 
nodes are $E(i+\tfrac{1}{2})$, $i\in\Z$ (\cite{Craizer-Pesco}).

\begin{Proposition} \label{prop:ConstantCurvature}
The following statements are equivalent:
\begin{enumerate}
\item 
The focal set of $(X,U)$ reduces to a line.
\item
The curvature $b$ of $(X,U)$ is constant. 
\item
There exists a constant vector field $E$ contained in all normal planes of $(X,U)$. 
\item
There exists a constant vector field $E$ such that $E=cX+dU$, for some constants $c$ and $d$.
\end{enumerate}
\end{Proposition}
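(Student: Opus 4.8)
The strategy is to prove the cycle of implications $(2)\Rightarrow(1)\Rightarrow(3)\Rightarrow(4)\Rightarrow(2)$, using the explicit formula \eqref{eq:NormalMeet} for the focal nodes $E(i+\tfrac12)$ and Lemma~\ref{lemma:Ubar} for the last step. The computational engine throughout is the relation \eqref{eq:Defineb}, $U'(i+\tfrac12)=-b(i+\tfrac12)X'(i+\tfrac12)$, which lets one differentiate expressions built from $X$ and $U$.

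\textbf{Step $(2)\Rightarrow(1)$.} Suppose $b\equiv b_0$ is constant. From \eqref{eq:NormalMeet}, $E(i+\tfrac12)=X(i)+b_0^{-1}U(i)$. Then $E'(i+\tfrac12)=X'(i+\tfrac12)+b_0^{-1}U'(i+\tfrac12)=X'(i+\tfrac12)-b_0^{-1}b_0 X'(i+\tfrac12)=0$, so all focal nodes coincide at a single point $E$; the focal set, being the cone from $O$ over a single point, is the line through $O$ and $E$.

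\textbf{Step $(1)\Rightarrow(3)$.} If the focal set reduces to a line $\ell$ through $O$, then the polygon with nodes $E(i+\tfrac12)$ lies on $\ell$, so all $E(i+\tfrac12)$ are scalar multiples of a fixed direction; in particular they are all proportional to one another. Here one should argue that they are in fact the \emph{same} point: consecutive focal nodes $E(i-\tfrac12)$ and $E(i+\tfrac12)$ both lie on the normal line $X(i)+tU(i)$ by \eqref{eq:NormalMeet}, and they also both lie on $\ell$; since the normal line and $\ell$ are distinct lines (transversality of $U$ keeps the normal line off the line $OX(i)$, and one checks $\ell$ cannot coincide with the normal line for all $i$ unless the configuration is degenerate), two distinct lines meet in at most one point, forcing $E(i-\tfrac12)=E(i+\tfrac12)$ for all $i$, hence a common constant value $E$. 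This constant $E=X(i)+b^{-1}(i+\tfrac12)U(i)$ lies in the normal plane at every $i$, giving (3).

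\textbf{Steps $(3)\Rightarrow(4)$ and $(4)\Rightarrow(2)$.} For $(3)\Rightarrow(4)$: if the constant $E$ lies in every normal plane, then for each $i$ we can write $E=c(i)X(i)+d(i)U(i)$; one must check $d(i)\neq0$ (if $d(i)=0$ then $E$ is parallel to $X(i)$, and if this held for two values of $i$ with $X(i),X(i+1)$ independent — guaranteed by local convexity — then $E=0$, a degenerate case one excludes), and then differencing the relation $E=c(i)X(i)+d(i)U(i)$ and using \eqref{eq:Defineb} forces $c$ and $d$ to be constant, exactly as in the proof of Lemma~\ref{lemma:Ubar}. For $(4)\Rightarrow(2)$: given $E=cX+dU$ with $E$ constant, apply $'$ to get $0=cX'(i+\tfrac12)+dU'(i+\tfrac12)=\big(c-d\,b(i+\tfrac12)\big)X'(i+\tfrac12)$, and since $X'(i+\tfrac12)\neq0$ (the polygon is nondegenerate) and $d\neq0$, we get $b(i+\tfrac12)=c/d$, a constant.

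\textbf{Main obstacle.} The routine differentiation steps are immediate; the delicate point is $(1)\Rightarrow(3)$, namely upgrading "the focal nodes lie on a common line" to "the focal nodes are a single point." This requires ruling out the degenerate situation where the line $\ell$ happens to lie in (or pass through $O$ along) the normal lines in a way that permits the $E(i+\tfrac12)$ to be genuinely different collinear points. I expect this to be handled by invoking local convexity $\alpha(i)>0$ and transversality $\beta(i+\tfrac12)>0$ to guarantee enough independence among $X(i-1),X(i),X(i+1),U(i)$ that two distinct normal lines cannot both be contained in a single line through $O$; the author may alternatively phrase the focal set's reduction to a line as \emph{defining} it via the $E(i+\tfrac12)$ collapsing, in which case this subtlety is absorbed into the definition and the implication is immediate.
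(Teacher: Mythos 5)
Your proof is correct and follows essentially the same route as the paper's: the explicit focal-node formula \eqref{eq:NormalMeet}, the equivalence between constancy of $E(i+\tfrac12)$ and constancy of $b$, and the decomposition $E=cX+dU$ of Lemma~\ref{lemma:Ubar}. You are in fact more careful than the paper on the passage from ``the focal set is a line'' to ``the focal nodes coincide'' (ruling out distinct but collinear focal nodes by intersecting the line through $O$ with the normal line at $i$, which cannot contain $O$ by transversality), a point the paper simply asserts.
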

\begin{proof}
It is easy to verify that three consecutive normal lines at $i-1$, $i$ and $i+1$ of $(X,U)$ are concurrent if and only if $b(i+\tfrac{1}{2})=b(i-\tfrac{1}{2})$.
So $E$ defined by Equation \eqref{eq:NormalMeet} is constant if and only if $b$ is constant. Moreover, the affine focal set
reduces to a line if and only if $E$ is constant. Thus 
$(1)\Leftrightarrow(2)$ and the implication 
$(1)\Rightarrow(3)$ also holds. If we assume that (3) holds,  the affine focal set is the line passing through $O$ and $E$
and so (1) also holds. The equivalence between $(3)$ and $(4)$ is given by Lemma \ref{lemma:Ubar}. 
\end{proof}

We say that an edge $(i+\tfrac{1}{2})$ is a {\it vertex} of $(X,U)$ if 
\begin{equation}\label{eq:DefineVertex}
b'(i)\cdot b'(i+1)<0.
\end{equation}

\subsection{Flattening points}

Let $\Delta=\Delta(X)$ be defined by
\begin{equation*}
\Delta(i+\tfrac{1}{2})=\left[X(i+2)-X(i+1), X(i+1)-X(i),X(i)-X(i-1)\right],
\end{equation*}
A node $i$ of the polygon $X$ is said to be a {\it flattening point} if 
\begin{equation*}
\Delta(i-\tfrac{1}{2})\cdot\Delta(i+\tfrac{1}{2})< 0.
\end{equation*}
Geometrically, the flattening point condition means that
$X(i-2)$ and $X(i+2)$ are in the same side with respect to the plane defined by $X(i-1),X(i),X(i+1)$.
We remark that, in \cite[p.218]{Pak}, a flattening point is called a {\it support point}. 

For each $i$, take $\lambda(i)=\lambda(X,U)(i)$ such that $-\lambda(i)X(i)+U(i)$ belongs to the osculating plane, i.e., 
\begin{equation}\label{eq:Lambda}
\left[ X'(i+\tfrac{1}{2}), X''(i), -\lambda(i)X(i)+U(i) \right]=0.
\end{equation}

\begin{lemma}
We can write 
\begin{equation}\label{eq:Lambda2}
-\lambda(i)X(i)+U(i)=AX'(i-\tfrac{1}{2})+BX'(i+\tfrac{1}{2}),
\end{equation}
where $\alpha(i) A=-\beta(i+\tfrac{1}{2})$.
\end{lemma}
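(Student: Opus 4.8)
The plan is to show that the vector $-\lambda(i)X(i)+U(i)$, which by the defining equation \eqref{eq:Lambda} lies in the osculating plane at $i$, can be expanded in the basis $\{X'(i-\tfrac12), X'(i+\tfrac12)\}$ of that plane, and then to pin down the coefficient $B$ by pairing against a suitable third vector. First I would observe that the osculating plane at $i$ is by definition spanned by $X'(i-\tfrac12)$ and $X'(i+\tfrac12)$ (these are linearly independent precisely because $\alpha(i)\neq 0$, as local convexity guarantees), and also by $X'(i+\tfrac12)$ and $X''(i)=X'(i+\tfrac12)-X'(i-\tfrac12)$; equation \eqref{eq:Lambda} says exactly that $-\lambda(i)X(i)+U(i)$ is in the plane spanned by $X'(i+\tfrac12)$ and $X''(i)$, hence in the plane spanned by $X'(i-\tfrac12)$ and $X'(i+\tfrac12)$. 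This immediately yields the existence of scalars $A,B$ with \eqref{eq:Lambda2}.

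The remaining task is the identity $\alpha(i)A = -\beta(i+\tfrac12)$. To isolate $A$ I would take the determinant of both sides of \eqref{eq:Lambda2} against the pair $X(i), X'(i+\tfrac12)$ (equivalently against $X(i)$ and $X(i+1)$, up to rewriting), so that the $B$-term drops out since $[\,\cdot\,,X(i),X'(i+\tfrac12), X'(i+\tfrac12)\,]=0$. On the left side, $[\,-\lambda(i)X(i)+U(i), X(i), X'(i+\tfrac12)\,]$: the $\lambda(i)X(i)$ contribution vanishes, leaving $[\,U(i),X(i),X'(i+\tfrac12)\,]$, which after reordering the arguments is $\pm[X(i),X(i+1)-X(i),U(i)] = \pm[X(i),X(i+1),U(i)] = \pm\beta(i+\tfrac12)$ by the definition of $\beta$ (recall $O$ is the origin). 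On the right side, $A\cdot[\,X'(i-\tfrac12), X(i), X'(i+\tfrac12)\,]$; reordering the three arguments and using $X'(i-\tfrac12)=X(i)-X(i-1)$, $X'(i+\tfrac12)=X(i+1)-X(i)$, a short expansion collapses this to $\pm A\cdot[X(i-1),X(i),X(i+1)] = \pm A\,\alpha(i)$ by the definition of $\alpha$. Matching the two sides, and checking that the signs coming from the two permutations agree so as to produce the stated relation $\alpha(i)A = -\beta(i+\tfrac12)$, finishes the proof.

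The only real point requiring care — and what I expect to be the main obstacle — is the bookkeeping of signs and of which terms survive: getting the permutation signs of the triple products right, confirming that every term containing $X(i)$ twice (from $X'(i\pm\tfrac12)$ expanded) indeed vanishes, and verifying that the $O$-as-origin convention makes $[X(i),X(i+1),U(i)]$ literally equal to $\beta(i+\tfrac12)$ rather than a shifted index. None of these steps is deep, but the constant and its sign must be tracked consistently throughout.
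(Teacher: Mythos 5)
Your proposal is correct and follows essentially the same route as the paper: existence of the expansion comes from noting that \eqref{eq:Lambda} places $-\lambda(i)X(i)+U(i)$ in the span of $X'(i-\tfrac12)$ and $X'(i+\tfrac12)$, and the coefficient $A$ is then extracted by taking the triple product of both sides of \eqref{eq:Lambda2} with $X'(i+\tfrac12)$ and $X(i)$, which is exactly the determinant identity the paper writes down. The sign bookkeeping you defer does work out ($[X'(i-\tfrac12),X'(i+\tfrac12),X(i)]=\alpha(i)$ and $[U(i),X'(i+\tfrac12),X(i)]=-\beta(i+\tfrac12)$), and your one slip --- saying ``pin down $B$'' where you mean $A$ --- is immaterial.
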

\begin{proof}
If follows from Equation \eqref{eq:Lambda} that Equation \eqref{eq:Lambda2} holds and
$$
A\left[ X'(i-\tfrac{1}{2}), X'(i+\tfrac{1}{2}), X(i) \right]= \left[ -\lambda(i)X(i)+U(i), X'(i+\tfrac{1}{2}), X(i) \right],
$$
thus proving the lemma.
\end{proof}

\begin{lemma}\label{lemma:Delta}
We have that 
$$
\beta(i+\tfrac{1}{2})\Delta(i+\tfrac{1}{2})=\lambda'(i+\tfrac{1}{2})\alpha(i)\alpha(i+1).
$$
\end{lemma}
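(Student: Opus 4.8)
The plan is to compute both sides of the claimed identity in terms of the expansion \eqref{eq:Lambda2} and the definitions of $\alpha$, $\beta$, $\Delta$. First I would rewrite $\Delta(i+\tfrac12)$ using the telescoped differences: since $X(i+2)-X(i+1)=X'(i+\tfrac32)$, $X(i+1)-X(i)=X'(i+\tfrac12)$, $X(i)-X(i-1)=X'(i-\tfrac12)$, we have $\Delta(i+\tfrac12)=\left[X'(i+\tfrac32),X'(i+\tfrac12),X'(i-\tfrac12)\right]$. The idea is to feed into this determinant the relation \eqref{eq:Lambda2} at the nodes $i$ and $i+1$, namely $-\lambda(i)X(i)+U(i)=A_iX'(i-\tfrac12)+B_iX'(i+\tfrac12)$ and $-\lambda(i+1)X(i+1)+U(i+1)=A_{i+1}X'(i+\tfrac12)+B_{i+1}X'(i+\tfrac32)$, so that the two "outer" difference vectors $X'(i-\tfrac12)$ and $X'(i+\tfrac32)$ can each be solved for modulo $X'(i+\tfrac12)$ in terms of $X$ and $U$.

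The key computational step is then to substitute $A_iX'(i-\tfrac12) = -\lambda(i)X(i)+U(i) - B_iX'(i+\tfrac12)$ and $B_{i+1}X'(i+\tfrac32) = -\lambda(i+1)X(i+1)+U(i+1) - A_{i+1}X'(i+\tfrac12)$ into $A_iB_{i+1}\Delta(i+\tfrac12)$; the $X'(i+\tfrac12)$-correction terms drop out by antisymmetry of the determinant, leaving
\begin{equation*}
A_iB_{i+1}\Delta(i+\tfrac12)=\left[-\lambda(i+1)X(i+1)+U(i+1),\,X'(i+\tfrac12),\,-\lambda(i)X(i)+U(i)\right].
\end{equation*}
Now I would expand the right-hand determinant, using that $X(i+1)=X(i)+X'(i+\tfrac12)$ and that any term containing $X'(i+\tfrac12)$ twice vanishes. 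One is left with a combination of $\left[X(i),X'(i+\tfrac12),U(i)\right]$, $\left[U(i+1),X'(i+\tfrac12),X(i)\right]$, and $\left[U(i+1),X'(i+\tfrac12),U(i)\right]$-type brackets; rewriting $X'(i+\tfrac12)=X(i+1)-X(i)$ and comparing with the definition $\beta(i+\tfrac12)=\left[X(i),X(i+1),U(i)\right]$ and its shifted analogues should collapse this to something proportional to $\lambda'(i+\tfrac12)\beta(i+\tfrac12)$ after the $U(i+1)\wedge U(i)$ terms cancel (because $U(i+1)=U(i)-b(i+\tfrac12)X'(i+\tfrac12)$ by parallelism, or more elementarily because such a term is forced to cancel for the identity to hold). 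Finally I would invoke the previous lemma, $\alpha(i)A_i=-\beta(i+\tfrac12)$ and $\alpha(i+1)B_{i+1}$ being likewise expressible — here one needs the companion relation for $B$ at node $i+1$, which comes from the analogue of the displayed determinant identity in the proof of that lemma with the roles of $X'(i-\tfrac12)$ and $X'(i+\tfrac12)$ swapped, giving $\alpha(i+1)B_{i+1}=\beta(i+\tfrac32)$ up to sign — and then divide through to isolate $\Delta(i+\tfrac12)$, producing the factor $\alpha(i)\alpha(i+1)$ in the denominator cleared to the other side.

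The main obstacle I anticipate is bookkeeping the signs and the precise constants $B_i$, $A_{i+1}$: the stated lemma only pins down $A$ (the coefficient of the "backward" difference) in terms of $\alpha$ and $\beta$, so I must separately extract the coefficient $B_{i+1}$ at the node $i+1$, which requires running the same wedge-with-$X$ trick against $X'(i-\tfrac12)$ at that node rather than against $X'(i+\tfrac12)$. Getting these two normalizations consistent, and making sure the $\lambda(i)$ and $\lambda(i+1)$ terms assemble into exactly $\lambda'(i+\tfrac12)=\lambda(i+1)-\lambda(i)$ with the correct overall sign, is the delicate part; the rest is antisymmetry of the $3\times 3$ determinant and substitution, which I would not grind through here.
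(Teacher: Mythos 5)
Your strategy does prove the lemma, and it is organized differently from the paper's argument: the paper takes the difference of the defining relation \eqref{eq:Lambda} at $i$ and $i+1$, massages it into $\left[ X'(i+\tfrac{1}{2}), -X'''(i+\tfrac{1}{2}), -\lambda(i)X(i)+U(i) \right]=-\lambda'(i+\tfrac{1}{2})\left[ X'(i+\tfrac{1}{2}),X''(i+1),X(i)\right]$, and then invokes \eqref{eq:Lambda2} only once, so that only the already-computed coefficient $A$ (with $\alpha(i)A=-\beta(i+\tfrac{1}{2})$) is needed. You instead substitute \eqref{eq:Lambda2} at both nodes directly into $\Delta(i+\tfrac{1}{2})=\left[X'(i+\tfrac{3}{2}),X'(i+\tfrac{1}{2}),X'(i-\tfrac{1}{2})\right]$; this is a clean use of multilinearity, but it costs you a second coefficient, $B_{i+1}$, which the paper never needs. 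Your reduction $A_iB_{i+1}\Delta(i+\tfrac{1}{2})=\left[-\lambda(i+1)X(i+1)+U(i+1),X'(i+\tfrac{1}{2}),-\lambda(i)X(i)+U(i)\right]$ is correct, and the right-hand side does collapse to $-\lambda'(i+\tfrac{1}{2})\beta(i+\tfrac{1}{2})$: the $X(i+1)\wedge X(i)$ term vanishes, the two cross terms contribute $(\lambda(i)-\lambda(i+1))\beta(i+\tfrac{1}{2})$ once one uses parallelism of $U$ to replace $U(i+1)$ by $U(i)$ inside the bracket, and the $U(i+1)\wedge U(i)$ term vanishes for exactly the reason you give.

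The one concrete error is your guessed value of the second coefficient. Wedging the expansion at node $i+1$ with $X'(i+\tfrac{1}{2})$ and $X(i+1)$ gives $\alpha(i+1)B_{i+1}=\beta(i+\tfrac{1}{2})$, \emph{not} $\pm\beta(i+\tfrac{3}{2})$; the latter is what you get for $A_{i+1}$, the coefficient of the backward difference at that node, namely $\alpha(i+1)A_{i+1}=-\beta(i+\tfrac{3}{2})$. With the correct value one finds $A_iB_{i+1}=-\beta(i+\tfrac{1}{2})^2/\bigl(\alpha(i)\alpha(i+1)\bigr)$, and dividing into $-\lambda'(i+\tfrac{1}{2})\beta(i+\tfrac{1}{2})$ yields exactly the stated identity. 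With your guessed value the computation would instead terminate at $\beta(i+\tfrac{3}{2})\Delta(i+\tfrac{1}{2})=\pm\lambda'(i+\tfrac{1}{2})\alpha(i)\alpha(i+1)$, which is not the lemma. Since you explicitly deferred this computation, flagged it as the delicate point, and correctly described the wedge trick that produces it, this is a fixable slip in the bookkeeping rather than a flaw in the method --- but it is the one place where the write-up as it stands would derail, so the coefficient must actually be computed, not guessed.
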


\begin{proof}
Take the difference of Equation \ref{eq:Lambda} at $i$ and $i+1$ to obtain, after some manipulations, the following relation:
$$
\left[ X'(i+\tfrac{1}{2}), -X'''(i+\tfrac{1}{2}), -\lambda(i)X(i)+U(i) \right]=-\lambda'(i+\tfrac{1}{2})\left[ X'(i+\tfrac{1}{2}),X''(i+1),X(i)\right].
$$
Now from Equation \eqref{eq:Lambda2} we obtain
$$
\frac{\beta(i+\tfrac{1}{2})}{   \alpha(i) }\  \Delta(i+\tfrac{1}{2})= \lambda'(i+\tfrac{1}{2})\alpha(i+1),
$$
thus proving the lemma.
\end{proof} 

From the above lemma, we conclude the following proposition:

\begin{Proposition}\label{prop:CharFlat}
The node $i$ is a flattening point for $X$ if and only if 
$$
\lambda'(i-\frac{1}{2})\cdot \lambda'(i+\frac{1}{2})<0, 
$$
where  $\lambda=\lambda(X,U)$ and $U$ is any transversal parallel vector field along $X$. 
\end{Proposition}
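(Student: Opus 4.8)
The plan is to combine Lemma~\ref{lemma:Delta} with the definition of a flattening point and check that the auxiliary function $\lambda = \lambda(X,U)$ carries the same sign information as $\Delta$, up to a factor that is positive under our convexity and transversality hypotheses. First I would recall that, by Lemma~\ref{lemma:Delta},
$$
\beta(i+\tfrac{1}{2})\,\Delta(i+\tfrac{1}{2}) = \lambda'(i+\tfrac{1}{2})\,\alpha(i)\,\alpha(i+1),
$$
and similarly at the index $i-\tfrac12$:
$$
\beta(i-\tfrac{1}{2})\,\Delta(i-\tfrac{1}{2}) = \lambda'(i-\tfrac{1}{2})\,\alpha(i-1)\,\alpha(i).
$$
Since $X$ is locally convex with respect to $O$, all $\alpha(j) > 0$; since $U$ is transversal, all $\beta(j+\tfrac12) > 0$. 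Hence from the two displayed identities the sign of $\Delta(i+\tfrac12)$ equals the sign of $\lambda'(i+\tfrac12)$, and the sign of $\Delta(i-\tfrac12)$ equals the sign of $\lambda'(i-\tfrac12)$. Multiplying, $\Delta(i-\tfrac12)\cdot\Delta(i+\tfrac12)$ and $\lambda'(i-\tfrac12)\cdot\lambda'(i+\tfrac12)$ have the same sign, so one is negative if and only if the other is. By definition, $i$ is a flattening point for $X$ precisely when $\Delta(i-\tfrac12)\cdot\Delta(i+\tfrac12) < 0$, which therefore holds if and only if $\lambda'(i-\tfrac12)\cdot\lambda'(i+\tfrac12) < 0$.

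The one genuinely substantive point — and the step I expect to be the main obstacle — is the clause ``for any transversal parallel vector field $U$'': a priori $\lambda$ depends on the choice of $U$, so I must argue that the sign of $\lambda'$, hence the flattening condition expressed through it, does not. The cleanest way is to observe that $\Delta(X)$ is manifestly independent of $U$, so the equivalence ``$i$ is a flattening point $\iff \lambda'(i-\tfrac12)\cdot\lambda'(i+\tfrac12) < 0$'' already forces the right-hand side to be $U$-independent in sign. Alternatively, one can check directly from Lemma~\ref{lemma:Ubar} that replacing $U$ by another transversal parallel vector field $\bar U = cX + dU$ in the normal plane multiplies $-\lambda(i)X(i)+U(i)$ by the nonzero constant $d$ after adjusting $\lambda$, so $\lambda$ is replaced by $d\lambda + (\text{const})$ and $\lambda'$ merely by $d\,\lambda'$ — the sign of $\lambda'(i-\tfrac12)\cdot\lambda'(i+\tfrac12)$ is unchanged since $d^2 > 0$. (One should note that a transversal parallel $U$ always exists, so the statement is not vacuous.) Either route closes the argument; I would present the first as the short proof and remark on the second for concreteness.
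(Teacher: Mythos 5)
Your proof is correct and follows the paper's own route: the paper likewise derives this proposition directly from Lemma~\ref{lemma:Delta} together with the positivity of $\alpha$ and $\beta$. Your additional verification that the sign of $\lambda'(i-\tfrac12)\cdot\lambda'(i+\tfrac12)$ is independent of the choice of transversal parallel $U$ (via $\bar\lambda = c + d\lambda$, so $\bar\lambda' = d\lambda'$) is a worthwhile remark that the paper leaves implicit.
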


\subsection{Equal-volume polygons} 

We say that the polygon $X$ is {\it equal-volume} with respect to the origin $O$ if 
$\alpha(i)=1$, for any $i\in\Z$ (see \cite{Craizer-Pesco}). We say that a transversal vector field $U$ is {\it unimodular} with respect to $O$ if $\beta(i+\tfrac{1}{2})=1$, for any $i\in\Z$.

For equal-volume polygons, there is a natural choice of a transversal parallel vector field, namely
\begin{equation}\label{eq:Uequalvolume}
U(i)= X''(i)+\lambda(i)X(i), \ \ i\in\Z.
\end{equation}
It is easy to see that $U$ is unimodular.

\begin{lemma}
The vector field $U$ defined by Equation \eqref{eq:Uequalvolume}  is parallel. 
\end{lemma}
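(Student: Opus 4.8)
The plan is to verify directly that $U$ defined by \eqref{eq:Uequalvolume} satisfies the parallelism condition \eqref{eq:Defineb}, i.e., that $U'(i+\tfrac12)$ is a scalar multiple of $X'(i+\tfrac12)$. Writing out $U' (i+\tfrac12) = U(i+1)-U(i)$ using \eqref{eq:Uequalvolume} gives
\begin{equation*}
U'(i+\tfrac12)= \bigl(X''(i+1)-X''(i)\bigr) + \bigl(\lambda(i+1)X(i+1)-\lambda(i)X(i)\bigr) = X'''(i+\tfrac12) + \bigl(\lambda X\bigr)'(i+\tfrac12).
\end{equation*}
So it suffices to show the right-hand side is proportional to $X'(i+\tfrac12)$, equivalently that it lies in the span of $X'(i+\tfrac12)$; since $X$ is locally convex, $X'(i+\tfrac12)$ is nonzero and the proportionality factor is then forced and determines $b(i+\tfrac12)$.

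The key step is to exhibit enough vanishing determinants. First I would use \eqref{eq:Lambda}, which in the equal-volume case ($\alpha\equiv 1$) says that $-\lambda(i)X(i)+U(i) = X''(i)+\lambda(i)X(i)-\lambda(i)X(i)+\cdots$ — more precisely, since here $U(i)=X''(i)+\lambda(i)X(i)$, \eqref{eq:Lambda} reads $[X'(i+\tfrac12),X''(i),X''(i)]=0$, which is automatic, so instead I would use \eqref{eq:Lambda} in the form that $X''(i)$ lies in the osculating plane $\mathrm{span}\{X'(i-\tfrac12),X'(i+\tfrac12)\}$ — indeed $X''(i)=X'(i+\tfrac12)-X'(i-\tfrac12)$ trivially. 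The substantive computation is to check that $U'(i+\tfrac12)$ is orthogonal (in the determinant sense) to two independent vectors, namely $X(i)$ and $X(i+1)$: I would show
\begin{equation*}
\bigl[X'(i+\tfrac12),\, U'(i+\tfrac12),\, X(i)\bigr]=0 \quad\text{and}\quad \bigl[X'(i+\tfrac12),\, U'(i+\tfrac12),\, X(i+1)\bigr]=0,
\end{equation*}
which together with the transversality/local convexity forces $U'(i+\tfrac12)\parallel X'(i+\tfrac12)$. For the first, expanding $U'(i+\tfrac12)=X'''(i+\tfrac12)+(\lambda X)'(i+\tfrac12)$ and using $X'''(i+\tfrac12)=X''(i+1)-X''(i)$, the terms involving $X''(i)$ and $X'(i-\tfrac12)$ drop out against $X'(i+\tfrac12)$ and $X(i)$ by multilinearity and the definition of $\lambda$ in \eqref{eq:Lambda}; the symmetric bracket handles $X(i+1)$. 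Alternatively, and perhaps more cleanly, one can expand $X(i)$ and $X(i+1)$ in the basis adapted to the osculating planes and reduce everything to the defining relation \eqref{eq:Lambda} at $i$ and at $i+1$.

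The main obstacle I expect is purely bookkeeping: keeping track of the index shifts between the integer lattice $\Z$ (where $X$, $U$, $\lambda$, $\alpha$ live) and the half-integer lattice $\Z^*$ (where $b$, $\beta$, $\Delta$ live), and correctly applying the discrete product rule $(fg)'(i+\tfrac12)=f'(i+\tfrac12)g(i)+f(i+1)g'(i+\tfrac12)$ when differentiating $\lambda(i)X(i)$. There is no conceptual difficulty — the statement is essentially a discretization of the smooth identity that the affine normal is parallel — but the computation must be organized so that each cancellation is visibly an instance of \eqref{eq:Lambda} or of the skew-symmetry of $[\cdot,\cdot,\cdot]$. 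I would present it by first recording the product-rule expansion of $(\lambda X)'(i+\tfrac12)$, then substituting into the two bracket identities above, and finally invoking local convexity to conclude that the coefficient $b(i+\tfrac12)$ exists and is well defined.
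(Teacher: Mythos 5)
There is a genuine gap, and it sits exactly at the step you pass over quickly. The two vanishing determinants you propose, $\bigl[X'(i+\tfrac{1}{2}),U'(i+\tfrac{1}{2}),X(i)\bigr]=0$ and $\bigl[X'(i+\tfrac{1}{2}),U'(i+\tfrac{1}{2}),X(i+1)\bigr]=0$, do not force $U'(i+\tfrac{1}{2})\parallel X'(i+\tfrac{1}{2})$. Since $X'(i+\tfrac{1}{2})=X(i+1)-X(i)$, the pairs $\{X'(i+\tfrac{1}{2}),X(i)\}$ and $\{X'(i+\tfrac{1}{2}),X(i+1)\}$ span the \emph{same} plane, so your two conditions are equivalent to one another and together assert only that $U'(i+\tfrac{1}{2})\in\mathrm{span}\{X(i),X(i+1)\}$, not that it lies on the line through $X'(i+\tfrac{1}{2})$. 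Worse, both identities hold for $\tilde U=X''+\mu X$ with an \emph{arbitrary} scalar function $\mu$: the $\mu$-terms vanish because $[X'(i+\tfrac{1}{2}),X(i+1),X(i)]=0$, and $[X'(i+\tfrac{1}{2}),X'''(i+\tfrac{1}{2}),X(i)]=\alpha(i+1)-\alpha(i)=0$ by the equal-volume hypothesis. Since $X''+\mu X$ is certainly not parallel for generic $\mu$, conditions that are insensitive to $\mu$ cannot prove the lemma.

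The underlying issue is one you yourself noticed and then set aside: for $U=X''+\lambda X$ the relation \eqref{eq:Lambda} is vacuous ($-\lambda(i)X(i)+U(i)=X''(i)$ is automatically in the osculating plane), so \eqref{eq:Lambda} places no constraint on $\lambda$ and cannot drive any cancellation. The real content of the lemma is the existence of a \emph{correct} choice of $\lambda$. The paper supplies it via the structure equations of \cite{Craizer-Pesco}: because $\alpha\equiv 1$ one may write $X'''(i+\tfrac{1}{2})=-\rho_2(i)X'(i+\tfrac{1}{2})+\tau(i+\tfrac{1}{2})X(i+1)$ (this is precisely your ``in-plane'' statement, made quantitative), and then
\begin{equation*}
U'(i+\tfrac{1}{2})=-\rho_2(i)X'(i+\tfrac{1}{2})+\bigl(\tau(i+\tfrac{1}{2})+\lambda(i+1)\bigr)X(i+1)-\lambda(i)X(i)
\end{equation*}
is proportional to $X'(i+\tfrac{1}{2})$ if and only if $-\lambda'(i+\tfrac{1}{2})=\tau(i+\tfrac{1}{2})$, which determines $\lambda$ up to an additive constant and is then checked to agree with $\lambda(X,U)$. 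To repair your argument you would have to replace one of your two brackets by a bracket against a vector outside $\mathrm{span}\{X(i),X(i+1)\}$ (for instance $X(i-1)$); that computation is exactly where the condition $\lambda'(i+\tfrac{1}{2})=-\tau(i+\tfrac{1}{2})$ appears, and it must be imposed, not derived from \eqref{eq:Lambda}.
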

\begin{proof}
We use the results of \cite{Craizer-Pesco}. Write
\begin{equation*}
\left\{
\begin{array}{c}
X'''(i+\tfrac{1}{2})=-\rho_2(i)X'(i+\tfrac{1}{2})+\tau(i+\tfrac{1}{2})X(i+1)\\
X'''(i+\tfrac{1}{2})=-\rho_1(i+1)X'(i+\tfrac{1}{2})+\tau(i+\tfrac{1}{2})X(i),
\end{array}
\right.
\end{equation*}
for some scalar functions $\rho_1$, $\rho_2$ and $\tau$ satisfying the compatibility equation
$$
\tau(i+\tfrac{1}{2})=\rho_2(i)-\rho_1(i+1).
$$
Defining $\bar{\lambda}$ by $-\bar{\lambda}'(i+\tfrac{1}{2})=\tau(i+\tfrac{1}{2})$, we have that
$$
U(i)=X''(i)+\bar{\lambda}(i)X(i)
$$
is parallel. Now condition \eqref{eq:Lambda} says that
$$
\lambda(i)-\bar{\lambda}(i)=0,
$$
thus implying that $\bar\lambda=\lambda$.
\end{proof}

\begin{Proposition}\label{prop:ParallelUnimodularEV}
Let $\bar{U}$  be transversal vector field that is parallel and unimodular.
Then 
\begin{equation*}
\bar{U}= U+cX,
\end{equation*}
where $U$ is defined by Equation \eqref{eq:Uequalvolume} and $c$ is some constant. 
\end{Proposition}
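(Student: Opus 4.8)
The plan is to exploit the uniqueness structure already set up in Lemma~\ref{lemma:Ubar}: both $\bar U$ and $U$ are transversal parallel vector fields, so I want to show that their difference is a multiple of $X$ and that this multiple is constant. First I would observe that, since $\bar U$ is transversal and parallel, Lemma~\ref{lemma:Ubar} (applied with the roles of $U$ and $\bar U$, or directly) tells us nothing until we know $\bar U$ lies in the normal plane of $(X,U)$. So the first genuine step is to check that $\bar U(i)$ indeed lies in $\mathrm{span}\{X(i),U(i)\}$ for each $i$. This should follow from comparing the parallel conditions: write $\bar U'(i+\tfrac12) = -\bar b(i+\tfrac12) X'(i+\tfrac12)$ and $U'(i+\tfrac12) = -b(i+\tfrac12) X'(i+\tfrac12)$, so that $(\bar U - \mu U)'(i+\tfrac12)$ is a multiple of $X'(i+\tfrac12)$ for any scalar $\mu$; choosing $\mu$ appropriately (or arguing dimensionally, since $\bar U$, $U$, $X$ at consecutive nodes must satisfy linear relations forced by transversality being only a codimension-one condition on a $3$-space) forces $\bar U - U \in \mathrm{span}\{X\}$ along the polygon.

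Once $\bar U = U + f X$ for some discrete function $f$, I would differentiate: $\bar U'(i+\tfrac12) = U'(i+\tfrac12) + f'(i+\tfrac12) X(i+1) + f(i) X'(i+\tfrac12)$. Using that both $\bar U$ and $U$ are parallel, the left side and the first term on the right are both multiples of $X'(i+\tfrac12)$, so $f'(i+\tfrac12) X(i+1)$ must also be a multiple of $X'(i+\tfrac12) = X(i+1) - X(i)$. Since $X(i)$ and $X(i+1)$ are linearly independent (this is part of local convexity with respect to the origin — $\alpha > 0$ forces consecutive nodes to be independent), this is only possible if $f'(i+\tfrac12) = 0$ for all $i$, i.e.\ $f \equiv c$ is constant. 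This gives $\bar U = U + cX$. The unimodularity of $\bar U$ is then automatic and in fact serves only as a consistency check: $\beta(X,\bar U)(i+\tfrac12) = [X(i), X(i+1), U(i) + cX(i)] = [X(i), X(i+1), U(i)] = \beta(X,U)(i+\tfrac12) = 1$, since $U$ is already unimodular (as noted after Equation~\eqref{eq:Uequalvolume}), so no extra constraint on $c$ arises.

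I expect the main obstacle to be the very first step — rigorously pinning down that $\bar U$ must lie in the normal plane of $(X,U)$ rather than merely being some arbitrary transversal parallel field. The cleanest route is probably to reduce to Lemma~\ref{lemma:Ubar}: that lemma says any transversal parallel field \emph{in the normal plane} of $(X,U)$ has the form $cX + dU$; so I would instead argue that the normal planes of $(X,\bar U)$ and $(X,U)$ coincide. Since $\bar U$ is parallel, $\bar U'(i+\tfrac12)$ is a multiple of $X'(i+\tfrac12)$, hence the plane $\mathrm{span}\{X(i),\bar U(i)\}$ is stable in the sense needed, and matching this with the focal/normal-plane description forces $d=1$ in the representation $\bar U = cX + dU$ once we impose that $\bar U$ (like $U$) satisfies $\bar U = X'' + \text{(mult.\ of }X)$ at each node via Equation~\eqref{eq:Uequalvolume} — indeed $\bar U(i) - X''(i) = (c + \lambda(i))X(i)$ must be proportional to $X(i)$, which it is, and the coefficient of $X''$ is pinned to $1$ by the parallel condition. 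Assembling these observations yields $\bar U = U + cX$ with $c$ constant, completing the proof.
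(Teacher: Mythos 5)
Your second step is sound: once you know $\bar U - U = fX$ pointwise, the discrete product rule together with the linear independence of $X(i)$ and $X(i+1)$ (guaranteed by $\alpha>0$) does force $f'\equiv 0$. The genuine gap is the first step, and none of your three attempts at it works. Parallelism alone cannot place $\bar U - U$ in $\mathrm{span}\{X\}$: the field $\bar U = 2U$ is transversal and parallel, yet $\bar U - U = U$ is transversal rather than radial; more generally, a parallel field is determined by a free initial value $\bar U(1)\in\R^3$ together with an essentially free curvature function $\bar b$, so the transversal parallel fields form a family far larger than the two-parameter family $cX+dU$ of Lemma \ref{lemma:Ubar}, and most of them do not even lie in the normal plane of $(X,U)$. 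Your fallback in the last paragraph is circular: you ``impose'' that $\bar U = X'' + (\text{multiple of }X)$, which is precisely the statement to be proved, and the claim that the coefficient of $X''$ is ``pinned to $1$ by the parallel condition'' is false ($2U$ again).

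The hypothesis you set aside as a mere consistency check --- unimodularity --- is exactly what closes the gap. Since both $U$ and $\bar U$ are unimodular, $\left[X(i),X(i+1),\bar U(i)-U(i)\right]=1-1=0$, so $W(i):=\bar U(i)-U(i)$ lies in $\mathrm{span}\{X(i),X(i+1)\}$; write $W(i)=f(i)X(i)+g(i)X'(i+\tfrac{1}{2})$. Parallelism of both fields gives $W'(i+\tfrac{1}{2})\in\mathrm{span}\{X'(i+\tfrac{1}{2})\}$, and expanding $W(i+1)-W(i)$ in the basis $X(i),X(i+1),X(i+2)$ (independent because $\alpha(i+1)>0$) shows that the coefficient of $X(i+2)$ equals $g(i+1)$, which must therefore vanish for every $i$; hence $g\equiv 0$, and your differentiation argument then yields $f\equiv c$. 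This is in substance what the paper does, except that the paper first invokes the normal-plane representation $\bar U = cX+dU$ with constant coefficients and then uses $\beta=1$ only to force $d=1$; either way, unimodularity carries real weight and cannot be relegated to an afterthought.
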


\begin{proof}
Since $\bar{U}$ is parallel, $\bar{U}(i)=cX(i)+dU(i)$, for certain constants $c$ and $d$. Since $\beta(i+\tfrac{1}{2})=1$, we conclude that 
$d=1$, thus proving the proposition.
\end{proof}

\section{Duality}

\subsection{Definition and properties}

The general notion of duality for codimension $2$ centroaffine immersions can be found in \cite[N9]{Nomizu} and \cite{Nomizu2}. For an explicit description of 
centroaffine duality of smooth spatial curves, see \cite{Craizer-Garcia}. 
We describe here a version of this duality for polygons in $3$-space.

Since $\left[X(i), X(i+1), U(i) \right]>0$, one can define a polygon $Y$ and a vector field $V$ along $Y$ uniquely by the following conditions:
\begin{equation*}\label{eq:Dual1}
Y(i+\tfrac{1}{2})\cdot X'(i+\tfrac{1}{2})=0;\  Y(i+\tfrac{1}{2})\cdot U(i)=1;\ Y(i+\tfrac{1}{2})\cdot X(i)=0,
\end{equation*}
and
\begin{equation*}\label{eq:Dual2}
V(i+\tfrac{1}{2})\cdot X'(i+\tfrac{1}{2})=0;\  V(i+\tfrac{1}{2})\cdot U(i)=0;\ V(i+\tfrac{1}{2})\cdot X(i)=1.
\end{equation*}
We say that $(Y,V)$ is the {\it dual} pair of $(X,U)$. The following lemma is straightforward:

\begin{lemma}\label{lemma:Dual}
Consider a locally convex polygon $X$ with a parallel transversal vector field $U$ and denote by $(Y,V)$ its dual pair. 
Then
\begin{equation*}\label{eq:Dual3}
Y(i+\tfrac{1}{2})\cdot U(i+1)=1;\ Y(i+\tfrac{1}{2})\cdot X(i+1)=0,
\end{equation*}
and 
\begin{equation*}\label{eq:Dual4}
V(i+\tfrac{1}{2})\cdot U(i+1)=0;\ V(i+\tfrac{1}{2})\cdot X(i+1)=1.
\end{equation*}
Moreover, $(X,U)$ is the dual pair of $(Y,V)$.
\end{lemma}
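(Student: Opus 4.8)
The plan is to derive the four displayed identities from the single extra hypothesis that $U$ is parallel, and then to read off the self-duality as a purely formal consequence of the (now symmetric) list of bilinear relations satisfied by $X,U,Y,V$.

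First, for the two identities involving $X(i+1)$, write $X(i+1)=X(i)+X'(i+\tfrac12)$ and take the inner product with $Y(i+\tfrac12)$ and with $V(i+\tfrac12)$. Since each of $Y(i+\tfrac12)$ and $V(i+\tfrac12)$ is orthogonal to $X'(i+\tfrac12)$ by definition, this gives at once $Y(i+\tfrac12)\cdot X(i+1)=Y(i+\tfrac12)\cdot X(i)=0$ and $V(i+\tfrac12)\cdot X(i+1)=V(i+\tfrac12)\cdot X(i)=1$. For the identities involving $U(i+1)$, use the parallelism relation \eqref{eq:Defineb} in the form $U(i+1)=U(i)-b(i+\tfrac12)X'(i+\tfrac12)$; taking inner products with $Y(i+\tfrac12)$ and $V(i+\tfrac12)$ and again using orthogonality to $X'(i+\tfrac12)$, together with the defining values $Y(i+\tfrac12)\cdot U(i)=1$ and $V(i+\tfrac12)\cdot U(i)=0$, yields $Y(i+\tfrac12)\cdot U(i+1)=1$ and $V(i+\tfrac12)\cdot U(i+1)=0$. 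This is the only place the parallel hypothesis is genuinely used: without it the correction term added to $U(i)$ would not be a multiple of $X'(i+\tfrac12)$.

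For the last assertion, note that the eight relations now in hand package symmetrically: for every half-integer $j=i+\tfrac12$ and each of the two adjacent integers $k\in\{i,i+1\}$ one has $Y(j)\cdot X(k)=0$, $Y(j)\cdot U(k)=1$, $V(j)\cdot X(k)=1$, $V(j)\cdot U(k)=0$. By definition, the dual pair of $(Y,V)$ is the unique pair $(\tilde X,\tilde U)$ indexed on $\Z$ such that, for each $i$ (with the index placement forced by the construction, the edge of $Y$ at $i$ having endpoints $Y(i\pm\tfrac12)$ and carrying the value $V(i-\tfrac12)$), the vectors $\tilde X(i),\tilde U(i)$ are orthogonal to $Y'(i)=Y(i+\tfrac12)-Y(i-\tfrac12)$, and $\tilde X(i)\cdot V(i-\tfrac12)=1$, $\tilde X(i)\cdot Y(i-\tfrac12)=0$, $\tilde U(i)\cdot V(i-\tfrac12)=0$, $\tilde U(i)\cdot Y(i-\tfrac12)=1$. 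Evaluating the packaged relations at $j=i-\tfrac12$ and $j=i+\tfrac12$ shows that $\tilde X(i)=X(i)$ and $\tilde U(i)=U(i)$ solve this system, so $(X,U)$ is the dual pair of $(Y,V)$.

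The only point requiring a word of care — and the main obstacle, such as it is — is verifying that this last system is non-degenerate, i.e. that $Y(i-\tfrac12),Y(i+\tfrac12),V(i-\tfrac12)$ are linearly independent, which is exactly what makes the dual of $(Y,V)$ well defined. But $Y(i-\tfrac12)$ spans the line $\mathrm{span}(X(i-1),X(i))^{\perp}$ and $Y(i+\tfrac12)$ spans $\mathrm{span}(X(i),X(i+1))^{\perp}$; these coincide only if $X(i-1),X(i),X(i+1)$ are coplanar through $O$, i.e. only if $\alpha(i)=0$, which is ruled out by local convexity, so $Y(i-\tfrac12)$ and $Y(i+\tfrac12)$ are independent. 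Since moreover $V(i-\tfrac12)\cdot X(i)=1$ while $Y(i\pm\tfrac12)$ both annihilate $X(i)$, the vector $V(i-\tfrac12)$ lies outside their span, giving the required independence. Thus, once the half-integer bookkeeping is set up, the proof is a routine verification, and the non-degeneracy underlying the self-duality is precisely the local convexity of $X$.
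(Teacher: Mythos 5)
Your proof is correct and is precisely the direct verification the paper intends: the paper states this lemma without proof (calling it straightforward), and your computation --- expanding $X(i+1)=X(i)+X'(i+\tfrac12)$ and $U(i+1)=U(i)-b(i+\tfrac12)X'(i+\tfrac12)$ against the defining relations, then reading off the self-duality from the symmetric list of pairings --- is the natural route. Your added check that $Y(i-\tfrac12),Y(i+\tfrac12),V(i-\tfrac12)$ are linearly independent (so that the dual of $(Y,V)$ is well defined) is a worthwhile supplement that the paper only establishes later, via Lemma~\ref{lemma:BetaDual}.
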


Recall that 
$$
\alpha(Y)(i+\tfrac{1}{2})=\left[Y(i-\tfrac{1}{2}), Y(i+\tfrac{1}{2}), Y(i+\tfrac{3}{2})\right]
$$
and 
$$
\beta(Y,V)(i)=\left[Y(i-\tfrac{1}{2}), Y(i+\tfrac{1}{2}), V(i+\tfrac{1}{2})\right].
$$

\begin{lemma}\label{lemma:BetaDual}
We have that 
$$
\beta(Y,V)(i)=\frac{\alpha(i)}{\beta(i-\tfrac{1}{2}) \beta(i+\tfrac{1}{2}) }, 
\ \ \alpha(Y)(i+\tfrac{1}{2})=\frac{\alpha(i)\alpha(i+1)}{\beta(i-\tfrac{1}{2})\beta(i+\tfrac{1}{2})\beta(i+\tfrac{3}{2})},
$$
where $\alpha=\alpha(X)$ and $\beta=\beta(X,U)$. 
\end{lemma}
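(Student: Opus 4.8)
The plan is to reduce both identities to a single vector computation, namely an explicit formula for the cross product $Y(i-\tfrac12)\times Y(i+\tfrac12)$, obtained from a closed form for the dual polygon $Y$.

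First I would observe that the three conditions defining $Y(i+\tfrac12)$ are satisfied by the explicit vector $W:=\beta(i+\tfrac12)^{-1}\,X(i)\times X(i+1)$: it is orthogonal to $X(i)$, orthogonal to $X'(i+\tfrac12)=X(i+1)-X(i)$, and its inner product with $U(i)$ equals $\beta(i+\tfrac12)^{-1}\left[X(i),X(i+1),U(i)\right]=1$. Note that $X(i)\times X(i+1)\neq 0$, since otherwise $\beta(i+\tfrac12)=\left[X(i),X(i+1),U(i)\right]$ would vanish. Hence, by the uniqueness in the definition of the dual pair,
\begin{equation*}
Y(i+\tfrac12)=\frac{X(i)\times X(i+1)}{\beta(i+\tfrac12)}.
\end{equation*}

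Next I would use the elementary identity $(a\times b)\times(c\times d)=\left[a,b,d\right]c-\left[a,b,c\right]d$ in $\R^3$, applied with $(a,b)=(X(i-1),X(i))$ and $(c,d)=(X(i),X(i+1))$. Since $\left[X(i-1),X(i),X(i)\right]=0$, this gives $(X(i-1)\times X(i))\times(X(i)\times X(i+1))=\left[X(i-1),X(i),X(i+1)\right]X(i)=\alpha(i)\,X(i)$, whence
\begin{equation*}
Y(i-\tfrac12)\times Y(i+\tfrac12)=\frac{\alpha(i)}{\beta(i-\tfrac12)\,\beta(i+\tfrac12)}\,X(i).
\end{equation*}
Both claims then follow by taking one more dot product. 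For the first, $\beta(Y,V)(i)=\left[Y(i-\tfrac12),Y(i+\tfrac12),V(i+\tfrac12)\right]=\bigl(Y(i-\tfrac12)\times Y(i+\tfrac12)\bigr)\cdot V(i+\tfrac12)$; since $X(i)\cdot V(i+\tfrac12)=1$ is one of the defining equations of $V$, this equals $\alpha(i)/\bigl(\beta(i-\tfrac12)\beta(i+\tfrac12)\bigr)$. For the second, $\alpha(Y)(i+\tfrac12)=\bigl(Y(i-\tfrac12)\times Y(i+\tfrac12)\bigr)\cdot Y(i+\tfrac32)$; substituting the closed form for $Y(i+\tfrac32)$ and using $X(i)\cdot\bigl(X(i+1)\times X(i+2)\bigr)=\left[X(i),X(i+1),X(i+2)\right]=\alpha(i+1)$ yields $\alpha(i)\alpha(i+1)/\bigl(\beta(i-\tfrac12)\beta(i+\tfrac12)\beta(i+\tfrac32)\bigr)$.

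The computation presents essentially no obstacle; the one point worth flagging is that the vector $V(i+\tfrac12)$ enters only through the single scalar $X(i)\cdot V(i+\tfrac12)=1$, so no closed form for $V$ is needed, and the parallelism of $U$ is not used here. As a byproduct, since $\alpha=\alpha(X)>0$ and $\beta=\beta(X,U)>0$, the two formulas show $\alpha(Y)>0$ and $\beta(Y,V)>0$, i.e.\ $(Y,V)$ is again a locally convex polygon with a transversal vector field, consistent with Lemma \ref{lemma:Dual}.
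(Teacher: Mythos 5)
Your proof is correct and, for the first identity, follows the paper's route exactly: both start from $\beta(i+\tfrac{1}{2})Y(i+\tfrac{1}{2})=X(i)\times X(i+1)$, cross two consecutive such relations to get $\beta(i-\tfrac{1}{2})\beta(i+\tfrac{1}{2})\,Y(i-\tfrac{1}{2})\times Y(i+\tfrac{1}{2})=\alpha(i)X(i)$, and dot with $V(i+\tfrac{1}{2})$ using $V(i+\tfrac{1}{2})\cdot X(i)=1$. For the second identity you diverge slightly: the paper invokes the involution property of the duality (that $(X,U)$ is the dual of $(Y,V)$, Lemma \ref{lemma:Dual}) to write $\alpha(Y)(i+\tfrac{1}{2})=\beta(Y,V)(i)\beta(Y,V)(i+1)\beta(i+\tfrac{1}{2})$ and then substitutes the first formula, whereas you dot the same cross product directly against the closed form of $Y(i+\tfrac{3}{2})$; your version is marginally more self-contained, and your closing observation that the formulas yield $\alpha(Y)>0$ and $\beta(Y,V)>0$ matches the remark the paper draws immediately after the lemma.
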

\begin{proof}
Write
\begin{equation*}
\beta(i+\tfrac{1}{2}) Y(i+\tfrac{1}{2})=X(i)\times X(i+1),\ \ \beta(i-\tfrac{1}{2}) Y(i-\tfrac{1}{2})=X(i-1)\times X(i),
\end{equation*}
Taking the vector product of both equations we obtain
$$
\alpha(i)X(i)=\beta(i-\tfrac{1}{2})\beta(i+\tfrac{1}{2})Y(i-\tfrac{1}{2})\times Y(i+\tfrac{1}{2}).
$$
Now take the dot product with $V(i+\tfrac{1}{2})$ to obtain the first formula. By duality, we can write
$$
\alpha(Y)(i+\tfrac{1}{2})=\beta(Y,V)(i)\beta(Y,V)(i+1)\beta(i+\tfrac{1}{2}).
$$
Now use the first formula to obtain the second one.
\end{proof}

From the above lemma, we conclude that $Y$ is a locally convex polygon and that $V$ is a transversal vector field.

\begin{lemma}\label{lemma:Wparallel}
The transversal vector field $V$ is parallel and
$$
V'(i)=\lambda(i)Y'(i),
$$
where $\lambda=\lambda(X,U)$. We conclude that $b(Y,V)=-\lambda(X,U)$. 
\end{lemma}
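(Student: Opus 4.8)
The plan is to show that $V'(i)$ is a scalar multiple of $Y'(i)$ and then to identify that scalar.

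\emph{Step 1 (parallelism up to scale).} First I would record that, from the defining relations of $(Y,V)$ together with Lemma~\ref{lemma:Dual}, one has at both endpoints
\[
Y(i\pm\tfrac12)\cdot X(i)=0,\quad Y(i\pm\tfrac12)\cdot U(i)=1,\quad V(i\pm\tfrac12)\cdot X(i)=1,\quad V(i\pm\tfrac12)\cdot U(i)=0 .
\]
Subtracting the $(i-\tfrac12)$ values from the $(i+\tfrac12)$ values gives $Y'(i)\cdot X(i)=Y'(i)\cdot U(i)=0$ and $V'(i)\cdot X(i)=V'(i)\cdot U(i)=0$. Since $U$ is transversal, $X(i)$ and $U(i)$ are linearly independent, so the vectors orthogonal to both form a line; and $Y'(i)\neq 0$ because $Y$ is locally convex. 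Hence $V$ is parallel along $Y$, with $V'(i)=\mu(i)Y'(i)$ for a unique scalar $\mu(i)$.

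\emph{Step 2 (identifying the scalar).} Here I would pass to cross-product descriptions. As in the proof of Lemma~\ref{lemma:BetaDual}, $\beta(i+\tfrac12)Y(i+\tfrac12)=X(i)\times X(i+1)=X(i)\times X'(i+\tfrac12)$; and since $X'(i+\tfrac12)\times U(i)$ is orthogonal to $X'(i+\tfrac12)$ and to $U(i)$ and satisfies $(X'(i+\tfrac12)\times U(i))\cdot X(i)=\beta(i+\tfrac12)$, the normalization $V(i+\tfrac12)\cdot X(i)=1$ forces $\beta(i+\tfrac12)V(i+\tfrac12)=X'(i+\tfrac12)\times U(i)$. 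Now substitute $U(i)=\lambda(i)X(i)+A\,X'(i-\tfrac12)+B\,X'(i+\tfrac12)$ from \eqref{eq:Lambda2}, with $\alpha(i)A=-\beta(i+\tfrac12)$ and, by the same computation (using that $U$ is parallel, so that $[X(i-1),X(i),U(i)]=[X(i-1),X(i),U(i-1)]=\beta(i-\tfrac12)$), $\alpha(i)B=\beta(i-\tfrac12)$: the $X'(i+\tfrac12)$-component of $U(i)$ dies in the product with $X'(i+\tfrac12)$, the $X(i)$-component reproduces a multiple of $Y(i+\tfrac12)$, and the $X'(i-\tfrac12)$-component produces a multiple of $X'(i-\tfrac12)\times X'(i+\tfrac12)$. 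The outcome is an identity
\[
V(i+\tfrac12)=\lambda(i)\,Y(i+\tfrac12)+N(i),\qquad N(i):=\tfrac{1}{\alpha(i)}\,X'(i-\tfrac12)\times X'(i+\tfrac12).
\]
Running the mirror computation at $i-\tfrac12$ — using $\beta(i-\tfrac12)V(i-\tfrac12)=X'(i-\tfrac12)\times U(i-1)$ together with $X'(i-\tfrac12)\times U(i-1)=X'(i-\tfrac12)\times U(i)$ (again because $U$ is parallel) — yields $V(i-\tfrac12)=\lambda(i)\,Y(i-\tfrac12)+N(i)$ with the \emph{same} node-$i$ correction $N(i)$. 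Subtracting the two identities makes $N(i)$ cancel, leaving $V'(i)=\lambda(i)Y'(i)$; comparison with \eqref{eq:Defineb} applied to $(Y,V)$ then gives $b(Y,V)=-\lambda(X,U)$.

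\emph{Main obstacle.} Step 1 is essentially formal. The hard part is Step 2: guessing the node-only correction $N(i)$ and checking the two pointwise identities — that is, handling the cross products and, above all, keeping every sign consistent with the conventions fixed in \eqref{eq:Lambda}--\eqref{eq:Lambda2}. A useful sanity check is that $N(i)$ is precisely the vector orthogonal to the osculating plane at $i$ normalized by $N(i)\cdot X(i)=1$, which explains why it does not depend on the endpoint $i\pm\tfrac12$ chosen.
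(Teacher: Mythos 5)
Your Step~1 coincides with the paper's first step. Your Step~2 takes a genuinely different route from the paper (which instead substitutes the cross-product formulas for $Y$ and $V$ into \eqref{eq:Lambda} and combines the result with the differenced relations $Y\cdot X'=0$ and $V\cdot X'=0$), and in principle it is a cleaner one; but it contains a sign error at exactly the point you flag as the main obstacle. In the product $X'(i+\tfrac12)\times U(i)$, the $X(i)$-component of $U(i)=\lambda(i)X(i)+A\,X'(i-\tfrac12)+B\,X'(i+\tfrac12)$ contributes
\[
\lambda(i)\,X'(i+\tfrac12)\times X(i)=-\lambda(i)\,X(i)\times X(i+1)=-\lambda(i)\,\beta(i+\tfrac12)\,Y(i+\tfrac12),
\]
so after dividing by $\beta(i+\tfrac12)$ the identity is $V(i+\tfrac12)=-\lambda(i)Y(i+\tfrac12)+N(i)$, and likewise $V(i-\tfrac12)=-\lambda(i)Y(i-\tfrac12)+N(i)$; your values of $A$, $B$ and $N(i)$, and the cancellation of $N(i)$, are all correct. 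Subtracting then gives $V'(i)=-\lambda(i)Y'(i)$, not $V'(i)=\lambda(i)Y'(i)$. A direct numerical check confirms this: for $X(0)=(1,-1,1)$, $X(1)=(1,0,0)$, $X(2)=(1,1,2)$ with $U(0)=U(1)=(1,0,1)$ one finds $\lambda(1)=1$, $Y'(1)=(0,-3,0)$, $V'(1)=(0,3,0)$, and also $V(\tfrac32)=(1,2,-1)\neq \lambda(1)Y(\tfrac32)+N(1)=(1,-2,1)$.

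So your method, carried out with the correct sign, proves $V'(i)=-\lambda(i)Y'(i)$ and hence $b(Y,V)=+\lambda(X,U)$ --- the opposite of the stated conclusion. This is not entirely your fault: the paper's own proof contains a compensating slip (its displayed identity should read $\lambda(i)Y(i+\tfrac12)\cdot X''(i)+V(i+\tfrac12)\cdot X''(i)=0$, since $[X',X'',U]=-(X'\times U)\cdot X''=-\beta V\cdot X''$), so the lemma as printed appears to be off by an overall sign. That sign is immaterial for everything downstream, because Propositions \ref{prop:Coplanarity} and \ref{prop:DualFlatVertex} only use sign changes of $\lambda'$ and $b'$. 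Nevertheless, as a proof of the lemma as written, your Step~2 asserts a pointwise identity that is false, and correcting it reverses the conclusion; you should either fix the sign throughout or state explicitly that what your argument establishes is $b(Y,V)=\lambda(X,U)$.
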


\begin{proof}
Observe that $V'(i)$ is orthogonal to $U(i)$ and to $X(i)$ and the same occurs with $Y'(i)$. Thus we conclude that
$V'(i)$ is parallel to $Y'(i)$, and we write $V'(i)=c(i)Y'(i)$. We claim that $c=\lambda(X,U)$. 
In fact, substituting 
$$
\beta(i+\tfrac{1}{2}) Y(i+\tfrac{1}{2})=X(i)\times X(i+1),\ \ \beta(i+\tfrac{1}{2}) V(i+\tfrac{1}{2})=X'(i+\tfrac{1}{2})\times U(i),
$$
in Equation \eqref{eq:Lambda} we obtain
$$
\lambda(i)Y(i+\tfrac{1}{2})\cdot X''(i)-V(i+\tfrac{1}{2})\cdot X''(i)=0.
$$
Since $Y(i+\tfrac{1}{2})\cdot X'(i+\tfrac{1}{2})=0$, we conclude that 
$$
Y(i+\tfrac{1}{2})\cdot X''(i)+Y'(i)\cdot X'(i-\tfrac{1}{2})=0, 
$$
and the same holds for $V$. Thus
$$
\lambda(i)Y'(i)\cdot X'(i-\tfrac{1}{2})-V'(i)\cdot X'(i-\tfrac{1}{2})=0,
$$
which implies that $(c(i)-\lambda(i))Y'(i)\cdot X'(i-\tfrac{1}{2})=0$. Since 
$$
Y'(i)\cdot X'(i-\tfrac{1}{2})=Y(i+\tfrac{1}{2})\cdot X'(i-\tfrac{1}{2})=-\frac{\alpha(i)}{\beta(i+\tfrac{1}{2})}\neq 0,
$$
the lemma is proved.
\end{proof}

Next lemma shows that duality preserves the centroaffine normal plane. 

\begin{lemma}
Let $(Y,V)$ be the dual of $(X,U)$ and consider another transversal vector field $\bar{U}$ given by Equation \eqref{eq:UNormalPlane}. Then the 
dual pair of $(X,\bar{U})$ is given by 
\begin{equation*}\label{eq:DualNormalPlane}
\bar{Y}=d^{-1}Y,\ \ \bar{V}=-cd^{-1}Y+V.
\end{equation*}
\end{lemma}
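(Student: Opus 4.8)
The plan is to verify directly that the stated $\bar Y$ and $\bar V$ satisfy the six defining equations of the dual pair of $(X,\bar U)$, and then invoke the uniqueness built into that definition. Since $\bar U = cX + dU$ with $d\neq 0$ is assumed transversal, the pair $(\bar Y,\bar V)$ is characterized by $\bar Y(i+\tfrac12)\cdot X'(i+\tfrac12)=0$, $\bar Y(i+\tfrac12)\cdot \bar U(i)=1$, $\bar Y(i+\tfrac12)\cdot X(i)=0$, together with $\bar V(i+\tfrac12)\cdot X'(i+\tfrac12)=0$, $\bar V(i+\tfrac12)\cdot \bar U(i)=0$, $\bar V(i+\tfrac12)\cdot X(i)=1$. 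So it suffices to check that $d^{-1}Y$ and $-cd^{-1}Y+V$ fill these roles.

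First I would treat $\bar Y = d^{-1}Y$. The conditions $\bar Y\cdot X'(i+\tfrac12)=0$ and $\bar Y\cdot X(i)=0$ are immediate from the corresponding relations for $Y$, scaled by $d^{-1}$. For the normalization, expand $\bar Y\cdot \bar U(i) = d^{-1}\bigl(c\,Y(i+\tfrac12)\cdot X(i) + d\,Y(i+\tfrac12)\cdot U(i)\bigr) = d^{-1}(c\cdot 0 + d\cdot 1) = 1$, using $Y(i+\tfrac12)\cdot X(i)=0$ and $Y(i+\tfrac12)\cdot U(i)=1$.

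Next I would treat $\bar V = -cd^{-1}Y + V$. Again $\bar V\cdot X'(i+\tfrac12)=0$ follows termwise since both $Y$ and $V$ annihilate $X'(i+\tfrac12)$. For $\bar V\cdot X(i)$ we get $-cd^{-1}\,Y(i+\tfrac12)\cdot X(i) + V(i+\tfrac12)\cdot X(i) = 0 + 1 = 1$. Finally, expanding $\bar V\cdot \bar U(i) = (-cd^{-1}Y + V)\cdot(cX(i)+dU(i))$ and using the four products $Y\cdot X(i)=0$, $Y\cdot U(i)=1$, $V\cdot X(i)=1$, $V\cdot U(i)=0$ (all at index $i+\tfrac12$), the four resulting terms are $0$, $-c$, $+c$, $0$, which sum to $0$. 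Hence $(d^{-1}Y, -cd^{-1}Y+V)$ satisfies the defining system, and by uniqueness it is the dual pair of $(X,\bar U)$.

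There is no real obstacle here: the proof is a short bookkeeping check, and the only thing to be careful about is keeping the six bilinear identities for $(Y,V)$ straight — in particular remembering that $Y(i+\tfrac12)\cdot X(i)=0$ while $V(i+\tfrac12)\cdot X(i)=1$, so the cross terms in the $\bar V\cdot\bar U$ expansion cancel rather than reinforce. One may also remark, as a sanity check, that plugging $c=0$, $d=1$ recovers $(\bar Y,\bar V)=(Y,V)$, as it must.
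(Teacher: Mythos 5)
Your proof is correct and is exactly the verification the paper intends — the paper's own proof simply says ``Straightforward verifications,'' and your six checks (using $Y\cdot X(i)=0$, $Y\cdot U(i)=1$, $V\cdot X(i)=1$, $V\cdot U(i)=0$ to see the cross terms in $\bar V\cdot\bar U$ cancel) supply precisely those details.
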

\begin{proof}
Straightforward verifications.
\end{proof}

\subsection{The equal-volume case}

\begin{lemma}\label{lemma:DualUnimodular}
Assume $X$ is equal-volume and $U$ is a parallel and unimodular transversal vector field. Denoting by $(Y,V)$
the dual pair, we have that also $Y$ is equal-volume and $V$ is a parallel and unimodular transversal vector field.
\end{lemma}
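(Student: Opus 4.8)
The plan is to read everything off from the closed-form expressions already established for the dual pair, so that essentially no new computation is needed. First I would invoke Lemma~\ref{lemma:BetaDual}: since $X$ is equal-volume we have $\alpha(X)(i)=1$ for all $i$, and since $U$ is unimodular we have $\beta(X,U)(i+\tfrac12)=1$ for all $i$. Substituting these normalizations into
\[
\beta(Y,V)(i)=\frac{\alpha(i)}{\beta(i-\tfrac12)\beta(i+\tfrac12)},\qquad
\alpha(Y)(i+\tfrac12)=\frac{\alpha(i)\alpha(i+1)}{\beta(i-\tfrac12)\beta(i+\tfrac12)\beta(i+\tfrac32)}
\]
yields $\beta(Y,V)(i)=1$ and $\alpha(Y)(i+\tfrac12)=1$ for every $i$. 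The first identity says precisely that $V$ is unimodular (and in particular $\beta(Y,V)>0$, so $V$ is indeed transversal, as was already observed just after Lemma~\ref{lemma:BetaDual}), and the second says precisely that $Y$ is equal-volume.

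It then remains only to check that $V$ is parallel, but this is exactly the content of Lemma~\ref{lemma:Wparallel}, whose only standing hypothesis is that $U$ be a parallel transversal vector field along the locally convex polygon $X$: it gives $V'(i)=\lambda(X,U)(i)\,Y'(i)$, so $V$ is a parallel transversal vector field along $Y$, with curvature $b(Y,V)=-\lambda(X,U)$. Combining these three observations proves the lemma.

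I do not expect any real obstacle here, since the substantive work was already carried out in Lemmas~\ref{lemma:BetaDual} and~\ref{lemma:Wparallel}; the only thing to verify is that the normalizations $\alpha\equiv1$, $\beta\equiv1$ propagate cleanly through those formulas. If a self-contained argument were preferred, one could instead re-derive $\beta(Y,V)\equiv1$ directly from the identity $\alpha(i)X(i)=Y(i-\tfrac12)\times Y(i+\tfrac12)$ (used in the proof of Lemma~\ref{lemma:BetaDual}, now with $\beta\equiv1$) paired with $V(i+\tfrac12)\cdot X(i)=1$, and then obtain $\alpha(Y)\equiv1$ from the duality relation $\alpha(Y)(i+\tfrac12)=\beta(Y,V)(i)\,\beta(Y,V)(i+1)\,\beta(i+\tfrac12)$; but quoting the earlier lemmas is shorter and I would present it that way.
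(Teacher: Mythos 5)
Your proof is correct and follows the same route as the paper, which simply cites Lemma~\ref{lemma:BetaDual} (with the parallelism of $V$ already supplied by Lemma~\ref{lemma:Wparallel}); you have merely written out the substitution $\alpha\equiv 1$, $\beta\equiv 1$ explicitly.
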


\begin{proof}
This lemma is a direct consequence of Lemma \ref{lemma:BetaDual}.
\end{proof}

\subsection{Coplanarity and concurrency} 

\begin{Proposition}\label{prop:Coplanarity}
Four consecutive nodes of $X$ are coplanar if and only if 
the corresponding three normal lines of $(Y,V)$ meet at a point. 
\end{Proposition}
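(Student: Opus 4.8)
The plan is to translate both conditions into statements about the scalar functions $b$ and $\lambda$ and then invoke the characterizations already established. On the $X$ side, four consecutive nodes $X(i-1),X(i),X(i+1),X(i+2)$ are coplanar precisely when $\Delta(X)(i+\tfrac12)=0$; by Lemma \ref{lemma:Delta} (and since $\beta,\alpha$ never vanish) this is equivalent to $\lambda'(i+\tfrac12)=0$, where $\lambda=\lambda(X,U)$. On the $Y$ side, the three normal lines of $(Y,V)$ at the nodes $i-\tfrac12$, $i+\tfrac12$, $i+\tfrac32$ are concurrent; recall from the proof of Proposition \ref{prop:ConstantCurvature} that three consecutive normal lines of a pair are concurrent exactly when the curvature of that pair agrees on the two edges between them, i.e. $b(Y,V)'=0$ on the middle node. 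By Lemma \ref{lemma:Wparallel} we have $b(Y,V)=-\lambda(X,U)$, so the relevant index has to be matched carefully: the node of $Y$ lying between the edges $(i+\tfrac12)$ and $(i+\tfrac32)$ of $Y$ is $i+1$ in the $\Z^*$-indexing of $Y$'s nodes, and the curvature of the $Y$-edge $(i+1)$ equals $-\lambda(i+1)$, while that of the $Y$-edge $(i)$ equals $-\lambda(i)$... I will need to keep the half-integer bookkeeping straight, but the upshot is that concurrency of the three normal lines of $(Y,V)$ is equivalent to the vanishing of an appropriate first difference of $\lambda(X,U)$.

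Concretely, I would proceed as follows. First, write out Lemma \ref{lemma:Delta} to get $\Delta(X)(i+\tfrac12)=0 \iff \lambda'(i+\tfrac12)=0$. Second, apply the concurrency criterion from Proposition \ref{prop:ConstantCurvature} to the dual pair: three consecutive normal lines of $(Y,V)$ meet iff the curvature $b(Y,V)$ takes the same value on the two adjacent $Y$-edges, which by Lemma \ref{lemma:Wparallel} says two consecutive values of $\lambda(X,U)$ coincide, i.e. some $\lambda'=0$. Third, check that the indices line up: the four coplanar nodes $X(i-1),\dots,X(i+2)$ correspond under duality (via $\beta(i+\tfrac12)Y(i+\tfrac12)=X(i)\times X(i+1)$) to the three $Y$-nodes $Y(i-\tfrac12),Y(i+\tfrac12),Y(i+\tfrac32)$ and hence to the three normal lines of $(Y,V)$ at those nodes; one then verifies that the difference $b(Y,V)$ sees is exactly $\lambda'(i+\tfrac12)$. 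Combining the three steps gives the equivalence.

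The main obstacle will be purely the index-matching in step three: making sure that ``the three normal lines of $(Y,V)$'' in the statement are attached to the nodes $Y(i-\tfrac12),Y(i+\tfrac12),Y(i+\tfrac32)$ and that the concurrency of these corresponds to $\lambda'$ evaluated at the same half-integer $i+\tfrac12$ appearing in Lemma \ref{lemma:Delta}, rather than at a neighbouring point. Once the normal line of $(Y,V)$ at a node $j$ of $Y$ is identified (it is the line $Y(j)+tV(j)$, and $V'=\lambda Y'$ by Lemma \ref{lemma:Wparallel}, so the ``$E$''-point of $(Y,V)$ between consecutive nodes is $Y(j)-\lambda^{-1}(\cdot)V(j)$ in analogy with \eqref{eq:NormalMeet}), the computation that this focal point is independent of which of the two edges one uses precisely when two consecutive $\lambda$'s agree is immediate, and everything else is routine substitution. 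I expect no genuine difficulty beyond this bookkeeping, since both sides have already been reduced to the single scalar invariant $\lambda(X,U)$.
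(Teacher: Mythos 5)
Your proposal is correct and follows essentially the same route as the paper: both reduce coplanarity to $\Delta(i+\tfrac12)=0$ via Lemma \ref{lemma:Delta}, reduce concurrency of the normal lines at $(i-\tfrac12)$, $(i+\tfrac12)$, $(i+\tfrac32)$ to $b(Y,V)(i)=b(Y,V)(i+1)$ via the criterion of Section \ref{sec:NormalLines}, and link the two through $b(Y,V)=-\lambda(X,U)$ from Lemma \ref{lemma:Wparallel}. The index bookkeeping you worried about works out exactly as you guessed, so no further argument is needed.
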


\begin{proof}
From Section \ref{sec:NormalLines}, we have that three consecutive normal lines of $(Y,V)$ at $(i-\tfrac{1}{2})$, $(i+\tfrac{1}{2})$ and $(i+\tfrac{3}{2})$, 
are concurrent if and only if $b(Y,V)(i)=b(Y,V)(i+1)$. 
By Lemma \ref{lemma:Wparallel},  this is equivalent to $\lambda(X,U)(i)=\lambda(X,U)(i+1)$. From Lemma \ref{lemma:Delta}, this is equivalent to
$\Delta(X,U)(i+\tfrac{1}{2})=0$, thus proving the proposition. 
\end{proof}

We say that the polygon $X$ is {\it generic} if no four consecutive nodes are coplanar.

\begin{corollary}
The polygon $X$ is generic if and only if $b(Y,V)'(i)\neq 0$, for any $i\in\Z$. 
\end{corollary}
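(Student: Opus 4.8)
The plan is to deduce this directly from Proposition~\ref{prop:Coplanarity}. First I would unwind the definition of genericity: $X$ is generic precisely when, for every $i\in\Z$, the four consecutive nodes $X(i-1),X(i),X(i+1),X(i+2)$ fail to be coplanar, which by the definition of $\Delta$ means $\Delta(X)(i+\tfrac{1}{2})\neq 0$ for all $i$. By Lemma~\ref{lemma:Delta}, and since $\alpha(i)$, $\alpha(i+1)$ and $\beta(i+\tfrac{1}{2})$ are all strictly positive for a locally convex polygon equipped with a transversal vector field, the condition $\Delta(X)(i+\tfrac{1}{2})\neq 0$ is equivalent to $\lambda'(i+\tfrac{1}{2})\neq 0$, i.e. to $\lambda(X,U)(i)\neq\lambda(X,U)(i+1)$.

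Next I would invoke Lemma~\ref{lemma:Wparallel}, which gives $b(Y,V)=-\lambda(X,U)$; hence $\lambda(X,U)(i)\neq\lambda(X,U)(i+1)$ is the same as $b(Y,V)(i)\neq b(Y,V)(i+1)$, that is, the discrete derivative $b(Y,V)'$ does not vanish at the corresponding half-integer. Letting $i$ range over all of $\Z$, this is exactly the assertion that $b(Y,V)'(i)\neq 0$ for all $i$. Alternatively, the same conclusion is obtained by reading off the concurrency criterion recorded in the proof of Proposition~\ref{prop:Coplanarity}: three consecutive normal lines of $(Y,V)$ meet at a point if and only if $b(Y,V)(i)=b(Y,V)(i+1)$, and by that proposition this happens for some $i$ precisely when some four consecutive nodes of $X$ are coplanar, i.e. precisely when $X$ is not generic.

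I do not expect any genuine obstacle here, since the statement is an immediate corollary of the results already established. The only points requiring a little care are the bookkeeping of the index shift relating a coplanar quadruple $X(i-1),X(i),X(i+1),X(i+2)$ of $X$ to the triple of normal lines of $(Y,V)$ at the nodes $Y(i-\tfrac{1}{2}),Y(i+\tfrac{1}{2}),Y(i+\tfrac{3}{2})$, and checking that all the factors $\alpha(i),\alpha(i+1),\beta(i+\tfrac{1}{2})$ occurring in Lemma~\ref{lemma:Delta} are nonzero, so that passing between the vanishing of $\Delta$ and the vanishing of $\lambda'$ (hence of $b(Y,V)'$) introduces no spurious coincidences.
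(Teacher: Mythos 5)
Your proposal is correct and follows essentially the same route as the paper: the corollary is stated there without proof as an immediate consequence of Proposition \ref{prop:Coplanarity}, whose own proof runs through exactly the chain of equivalences you spell out (concurrency of normal lines of $(Y,V)$ $\Leftrightarrow$ $b(Y,V)(i)=b(Y,V)(i+1)$ $\Leftrightarrow$ $\lambda(i)=\lambda(i+1)$ $\Leftrightarrow$ $\Delta(i+\tfrac{1}{2})=0$, via Lemmas \ref{lemma:Wparallel} and \ref{lemma:Delta}). Your attention to the positivity of $\alpha$ and $\beta$ and to the index bookkeeping is exactly the right level of care, and no further argument is needed.
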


\subsection{Vertex and flattening points}

The main result of the section is the following:

\begin{Proposition}\label{prop:DualFlatVertex}
Assume that $X$ is a generic polygon. Then the node $i$ is a flattening point of $(X,U)$ if and only if the edge $i$ is a vertex of $(Y,V)$.
\end{Proposition}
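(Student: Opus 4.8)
The plan is to translate the flattening condition for $(X,U)$ and the vertex condition for $(Y,V)$ into statements about sign changes of the same discrete function, and then observe they coincide. First I would recall from Proposition \ref{prop:CharFlat} that $i$ is a flattening point of $(X,U)$ exactly when $\lambda(X,U)'(i-\tfrac{1}{2})\cdot\lambda(X,U)'(i+\tfrac{1}{2})<0$, where $\lambda=\lambda(X,U)$ and $U$ is a parallel transversal vector field. On the other side, by definition the edge $i$ is a vertex of $(Y,V)$ when $b(Y,V)'(i-\tfrac{1}{2})\cdot b(Y,V)'(i+\tfrac{1}{2})<0$ — here I am using the indexing convention under which $b(Y,V)$ is a function on the edges of $Y$, i.e.\ on $\Z$ (since $Y$ itself is indexed by $\Z^*$), so that its discrete derivative is indexed by $\Z^*$ shifted appropriately; the vertex condition for the edge $i$ of $Y$ is the product of the two neighbouring second-difference-type quantities being negative, matching \eqref{eq:DefineVertex} read in the dual indexing.

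The key step is then immediate from Lemma \ref{lemma:Wparallel}: it gives $b(Y,V)=-\lambda(X,U)$ as functions on $\Z$. Hence $b(Y,V)'=-\lambda(X,U)'$, so the product $b(Y,V)'(i-\tfrac{1}{2})\cdot b(Y,V)'(i+\tfrac{1}{2})$ equals $\lambda(X,U)'(i-\tfrac{1}{2})\cdot\lambda(X,U)'(i+\tfrac{1}{2})$ (the two sign changes cancel). Therefore the vertex condition for the edge $i$ of $(Y,V)$ is literally the same inequality as the flattening-point condition for the node $i$ of $(X,U)$, via Proposition \ref{prop:CharFlat}. Genericity of $X$ enters to guarantee, through the corollary to Proposition \ref{prop:Coplanarity}, that $b(Y,V)'(i)\neq 0$ for all $i$, so that the sign of each factor is well defined and the strict inequalities make sense without degenerate zeros confusing the count.

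The main obstacle — really the only subtlety — is bookkeeping of the half-integer indices: one must check that the node $i$ of $X$ corresponds under duality to the edge $i$ of $Y$ (rather than to edge $i\pm\tfrac12$ or to a node of $Y$), and that the discrete derivatives in Proposition \ref{prop:CharFlat} and in \eqref{eq:DefineVertex} are being evaluated at the matching half-integer arguments. This is settled by unwinding the defining relations of $(Y,V)$: the dual object $Y(i+\tfrac{1}{2})$ is attached to the edge $(i+\tfrac{1}{2})$ of $X$, so edges of $Y$ sit at integer indices, and Lemma \ref{lemma:Wparallel}'s identity $V'(i)=\lambda(i)Y'(i)$ is already written with $i\in\Z$, confirming that $b(Y,V)$ and $\lambda(X,U)$ are both functions of $i\in\Z$ with the stated equality. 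Once the indices are aligned, no computation remains.
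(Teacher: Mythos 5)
Your argument is exactly the paper's: reduce the vertex condition for $(Y,V)$ to a sign condition on $b(Y,V)'$, convert it via Lemma \ref{lemma:Wparallel} ($b(Y,V)=-\lambda(X,U)$, with the two minus signs cancelling in the product), and invoke Proposition \ref{prop:CharFlat} to identify this with the flattening condition for $(X,U)$. The extra remarks on index bookkeeping and on genericity are consistent with the paper and add no new content; the proof is correct.
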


\begin{proof}
We have that  the edge $i$ of $(Y,V)$ is a vertex if and only if 
$$
b(Y,V)'(i-\frac{1}{2})\cdot b(Y,V)'(i+\frac{1}{2})<0.
$$
From Lemma \ref{lemma:Wparallel}, this is equivalent to
$$
\lambda(X,U)'(i-\frac{1}{2})\cdot \lambda(X,U)'(i+\frac{1}{2})<0.
$$
By Proposition \ref{prop:CharFlat}, this condition is equivalent to the node $i$ of $(X,U)$ being a flattening point.
\end{proof}

\section{Planar and Constant Curvature Polygons} \label{sec:AffineCylindricalPedal}

\subsection{Affine cylindrical pedal}

Consider a locally convex planar polygon $x(i)$ and let $u(i)\in\R^2$ be a transversal planar vector field that is {\it parallel}, i.e., we can write
\begin{equation*}
u'(i+\tfrac{1}{2})=-b(i+\tfrac{1}{2})x'(i+\tfrac{1}{2}), \ \  i\in\Z,
\end{equation*}
for some scalar function $b=b(x,u)$. The lines $x+tu$, $t\in\R$, are called the {\it normal lines} of the pair $(x,u)$.

The {\it lifting} of $(x,u)$ 
is the pair $(X,U)$ given by
$$
X(i)=(x(i),1),\ \ U(i)=(u(i),0).
$$
Observe that $U$ is parallel along $X$ and that $b(X,U)=b(x,u)$.

The {\it affine cylindrical pedal} of $(x,u)$ is defined by
$$
Y(i+\tfrac{1}{2})=\left(y(i+\tfrac{1}{2}), -y(i+\tfrac{1}{2})\cdot x(i) \right),
$$
where $y$ denotes the co-normal vector field of $(x,u)$, i.e., 
$$
y(i+\tfrac{1}{2})\cdot x'(i+\tfrac{1}{2})=0,\ \ y(i+\tfrac{1}{2})\cdot u(i)=1.
$$
It is easy to verify that the constant vector field $E=(0,0,1)$ is transversal to $Y$ and $(Y,E)$ is dual to $(X,U)$. By Proposition \ref{prop:ConstantCurvature}, 
$(Y,E)$ is a constant curvature pair.

The following proposition says that if, conversely, we start with a spatial polygon transversal to a constant vector field $E$, then it is necessarily
the affine cylindrical pedal of some planar pair $(x,u)$. 

\begin{Proposition}\label{prop:AffineCylindricalPedal}
Assume that $Y(i+\tfrac{1}{2})=(y(i+\tfrac{1}{2}),z(i+\tfrac{1}{2}))$ is a locally convex spatial polygon transversal to a constant vector field $E$. Then $Y$ is the affine cylindrical
pedal of some planar parallel pair $(x,u)$.
\end{Proposition}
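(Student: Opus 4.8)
The plan is to recover $(x,u)$ as an affine ``section'' of the dual of $(Y,E)$ and then to read the cylindrical pedal relations off the duality identities. Since all the notions in play are centroaffine, I would first apply an orientation‑preserving linear change of coordinates so that $E=(0,0,1)$; this affects neither local convexity nor transversality. As $\beta(Y,E)(i)=[Y(i-\tfrac{1}{2}),Y(i+\tfrac{1}{2}),E]>0$, the dual pair $(X,U)$ of $(Y,E)$ is well defined; by the observation following Lemma~\ref{lemma:BetaDual}, $X$ is locally convex and $U$ is transversal, and since $E$ is trivially parallel, Lemma~\ref{lemma:Wparallel} gives that $U$ is parallel.

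Among the defining relations of the dual of $(Y,E)$, together with Lemma~\ref{lemma:Dual}, one has $X(i)\cdot E=1$ and $U(i)\cdot E=0$ for every $i$; with $E=(0,0,1)$ this says exactly that $X(i)=(x(i),1)$ and $U(i)=(u(i),0)$ for some planar $x(i),u(i)\in\R^2$, so $(X,U)$ is the lifting of a planar pair $(x,u)$. The third coordinates of $X'$ and of $U'$ vanish, so $U'(i+\tfrac{1}{2})=-b(i+\tfrac{1}{2})X'(i+\tfrac{1}{2})$ descends to $u'(i+\tfrac{1}{2})=-b(i+\tfrac{1}{2})x'(i+\tfrac{1}{2})$, i.e.\ $u$ is parallel; and the elementary $2\times2$ identities $\alpha(X)(i)=\det(x'(i-\tfrac{1}{2}),x'(i+\tfrac{1}{2}))$ and $\beta(X,U)(i+\tfrac{1}{2})=\det(x'(i+\tfrac{1}{2}),u(i))$ turn the local convexity of $X$ and the transversality of $U$ into the planar convexity of $x$ and the transversality of $u$.

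It then remains to check that $Y$ is the affine cylindrical pedal of $(x,u)$. By Lemma~\ref{lemma:Dual} one has $Y(i+\tfrac{1}{2})\cdot X'(i+\tfrac{1}{2})=0$, $Y(i+\tfrac{1}{2})\cdot U(i)=1$ and $Y(i+\tfrac{1}{2})\cdot X(i)=0$. Writing $Y(i+\tfrac{1}{2})=(y(i+\tfrac{1}{2}),z(i+\tfrac{1}{2}))$ and substituting $X'(i+\tfrac{1}{2})=(x'(i+\tfrac{1}{2}),0)$, $U(i)=(u(i),0)$ and $X(i)=(x(i),1)$, the first two relations become $y(i+\tfrac{1}{2})\cdot x'(i+\tfrac{1}{2})=0$ and $y(i+\tfrac{1}{2})\cdot u(i)=1$, which (since $x'(i+\tfrac{1}{2})$ and $u(i)$ are independent by transversality) identify $y$ with the co-normal vector field of $(x,u)$, while the third relation becomes $z(i+\tfrac{1}{2})=-y(i+\tfrac{1}{2})\cdot x(i)$, precisely the third coordinate in the definition of the affine cylindrical pedal. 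Hence $Y$ coincides with that pedal.

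The step I expect to be the crux is the passage from $\R^3$ to the planar data, namely being sure that the dual construction forces the third coordinates of $X(i)$ and $U(i)$ to be the constants $1$ and $0$. This is not a computation but a careful transcription of the duality relations of Lemma~\ref{lemma:Dual} under the interchange of $(X,U)$ and $(Y,E)$; once it is in place, the rest is bookkeeping plus the two small determinant identities above.
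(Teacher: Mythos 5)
Your proof is correct and follows essentially the same route as the paper: dualize $(Y,E)$, identify the dual pair as the lifting of a planar parallel pair, and conclude via the involutivity of duality (Lemma \ref{lemma:Dual}). The only difference is that you read the planarity of $X$ directly off the duality relation $X(i)\cdot E=1$, whereas the paper deduces it from the constant curvature of $(Y,E)$ via Proposition \ref{prop:Coplanarity}; your version is slightly more direct and also spells out the pedal identities that the paper leaves implicit.
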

\begin{proof}
Denote by $(X,U)$ the dual of $(Y,E)$. Since $(Y,E)$ is a constant curvature pair, Proposition \ref{prop:Coplanarity} 
implies that $X$ is planar. Moreover, since $U$ is orthogonal to $E$, it must belong to the same plane. 
Thus $(X,U)$ is the lifting of some planar pair $(x,u)$. 
\end{proof}

\begin{remark} Consider any locally convex polygon $Y$. Then it is locally transversal to a constant vector field $E$ that we may assume, by an affine change of coordinates,
to be $(0,0,1)$. Then Proposition \ref{prop:AffineCylindricalPedal} implies $Y$ is locally an affine cylindrical pedal. 
\end{remark}

\begin{corollary}
Consider a polygon $Y$ in $3$-space and a transversal parallel vector field $V$ such that the pair $(Y,V)$ has constant curvature. 
Then $Y$ is the affine cylindrical pedal of a planar polygon $x$ with a transversal parallel vector field $u$.
\end{corollary}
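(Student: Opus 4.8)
The plan is to reduce the corollary directly to Proposition \ref{prop:AffineCylindricalPedal}, since all the work has already been done there; the only gap to fill is the passage from "constant curvature" to "transversal to a constant vector field." First I would invoke Proposition \ref{prop:ConstantCurvature}: the hypothesis that $(Y,V)$ has constant curvature $b(Y,V)$ gives, by the equivalence $(2)\Leftrightarrow(4)$, a constant vector field $E$ and constants $c,d$ with $E=cY+dV$. Since $V$ is transversal to $Y$, one checks that $d\neq 0$, and then $E$ is itself transversal to $Y$: indeed $\left[Y(i-\tfrac12),Y(i+\tfrac12),E\right]=d\left[Y(i-\tfrac12),Y(i+\tfrac12),V(i+\tfrac12)\right]=d\,\beta(Y,V)(i)$, which is nonzero of constant sign because $\beta(Y,V)(i)>0$.

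Once $Y$ is seen to be a locally convex polygon transversal to the constant vector field $E$, Proposition \ref{prop:AffineCylindricalPedal} applies verbatim and yields a planar parallel pair $(x,u)$ whose affine cylindrical pedal is $Y$. This already proves the corollary, but to match the statement I would also record that the transversal parallel vector field on $x$ is exactly the $u$ produced in that proposition: by the construction preceding Proposition \ref{prop:AffineCylindricalPedal}, the lifting $(X,U)$ of $(x,u)$ is dual to $(Y,E)$, and $U$ is parallel along $X$ with $b(X,U)=b(x,u)$, so $u$ is parallel in the planar sense. Thus $Y$ is the affine cylindrical pedal of the planar polygon $x$ together with its transversal parallel field $u$.

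I do not expect a genuine obstacle here: the corollary is essentially a restatement of Proposition \ref{prop:AffineCylindricalPedal} with "constant curvature" substituted for "transversal to a constant vector field," and Proposition \ref{prop:ConstantCurvature} is precisely the dictionary between these two conditions. The one point requiring a line of care is verifying $d\neq 0$ and hence the transversality of $E$; this is immediate from transversality of $V$ (if $d=0$ then $E=cY$ would be tangent to the cone over $Y$, contradicting that $E$ lies in every normal plane while being constant — or more directly, $E=cY$ makes $\beta(Y,E)$ vanish identically). With that observation in place, the proof is a two-sentence citation chain.

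\begin{proof}
Since $(Y,V)$ has constant curvature, Proposition \ref{prop:ConstantCurvature} gives a constant vector field $E$ and constants $c,d$ with $E=cY+dV$. If $d=0$ then $E=cY$, which would force $\beta(Y,E)(i)=\left[Y(i-\tfrac12),Y(i+\tfrac12),E\right]=0$; hence $d\neq 0$. Then
$$
\left[Y(i-\tfrac12),Y(i+\tfrac12),E\right]=d\left[Y(i-\tfrac12),Y(i+\tfrac12),V(i+\tfrac12)\right]=d\,\beta(Y,V)(i),
$$
which does not change sign since $\beta(Y,V)(i)>0$ for all $i$. Thus $Y$ is a locally convex polygon transversal to the constant vector field $E$, and Proposition \ref{prop:AffineCylindricalPedal} implies that $Y$ is the affine cylindrical pedal of a planar parallel pair $(x,u)$. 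As $U$ is parallel along the lifting $X=(x,1)$ with $b(X,U)=b(x,u)$, the field $u$ is a transversal parallel vector field along $x$, which completes the proof.
\end{proof}
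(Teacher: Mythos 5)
Your proposal is correct and follows essentially the same route as the paper: apply Proposition \ref{prop:ConstantCurvature} to produce a constant transversal vector field $E$ in the normal planes, then invoke Proposition \ref{prop:AffineCylindricalPedal}. The only difference is that you explicitly verify the transversality of $E$ (the computation $\left[Y(i-\tfrac12),Y(i+\tfrac12),E\right]=d\,\beta(Y,V)(i)$ and the check that $d\neq 0$), which the paper's proof asserts without detail.
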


\begin{proof}
It follows from Proposition \ref{prop:ConstantCurvature} that $(Y,V)$ has constant curvature if and only if there exists 
a constant transversal vector field $E$ satisfying $V=cY+E$, for some constant $c$.
By Proposition \ref{prop:AffineCylindricalPedal}, $Y$ is the affine cylindrical pedal of some planar pair $(x,u)$. 
\end{proof}

\subsection{Constant curvature equal-volume polygons}

A planar polygon $x$ is called {\it equal-area} if 
\begin{equation*}
\left[ x(i)-x(i-1), x(i+1)-x(i)  \right]=1,
\end{equation*}
where $[\cdot,\cdot]$ denotes the area of two planar vectors (see \cite{Craizer-Teixeira-Alvim}). We say that a transversal vector field $u$ is {\it unimodular} if 
\begin{equation*}
\left[  x(i+1)-x(i), u(i)  \right]=1.
\end{equation*}
For equal-area polygons, it is easy to verify that
\begin{equation*}
u(i)=x''(i)
\end{equation*}
is the only transversal vector field that is parallel and unimodular. Observe that, in this case,
the lifting $X=(x,1)$ of $x$ is equal-volume and the transversal vector field $U=(u,0)$ is parallel and unimodular. 

The affine cylindrical pedal of $(x,u)$ is a pair $(Y,E)$, where $Y$ is a locally convex polygon and $E=(0,0,1)$. Moreover, by Lemma \ref{lemma:DualUnimodular},
the pair $(Y,E)$ is also equal-volume and unimodular. 

We remark that it is not always true that a locally convex polygon $Y$ is the affine cylindrical pedal of a pair $(x,u)$ with $x$ equal-area and $u$ unimodular. 
In fact, we have the following proposition:

\begin{Proposition}\label{prop:AffineCylindricalEV}
Consider an equal-volume polygon $Y$ in $3$-space. Then it is the cylindrical pedal of an equal-area polygon $x$ with $u$ unimodular
if and only if there exists a unimodular constant vector field $E$ transversal to $Y$.
\end{Proposition}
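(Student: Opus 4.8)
The plan is to prove both implications by tracking how the constant vector field $E$ interacts with the equal-area and unimodular conditions, using the lifting construction together with Lemma~\ref{lemma:DualUnimodular} and Proposition~\ref{prop:AffineCylindricalPedal}. First, for the ``only if'' direction, suppose $Y$ is the affine cylindrical pedal of an equal-area polygon $x$ with unimodular $u$. By the discussion preceding Proposition~\ref{prop:AffineCylindricalEV}, the lifting $(X,U)=((x,1),(u,0))$ is equal-volume with $U$ parallel and unimodular, and the pair dual to $(X,U)$ is precisely $(Y,E)$ with $E=(0,0,1)$. This $E$ is constant and transversal to $Y$ by construction; it only remains to check that $E$ is unimodular, i.e.\ that $\beta(Y,E)(i)=1$ for all $i$. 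But by Lemma~\ref{lemma:DualUnimodular}, since $(X,U)$ is equal-volume with $U$ parallel and unimodular, the dual pair $(Y,E)$ is equal-volume with $E$ parallel and unimodular. Hence the required unimodular constant transversal vector field exists.

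For the ``if'' direction, assume $Y$ is equal-volume and admits a unimodular constant transversal vector field $E$. By Proposition~\ref{prop:AffineCylindricalPedal}, $Y$ is the affine cylindrical pedal of some planar parallel pair $(x,u)$; equivalently, letting $(X,U)$ be the dual of $(Y,E)$, the pair $(X,U)$ is the lifting of $(x,u)$, so $X=(x,1)$ and $U=(u,0)$. I then need to show that this particular $(x,u)$ can be chosen so that $x$ is equal-area and $u$ is unimodular. Since $E$ is unimodular and $Y$ is equal-volume, $\beta(Y,E)\equiv 1$ and $\alpha(Y)\equiv 1$; applying the duality formulas of Lemma~\ref{lemma:BetaDual} (with the roles of $(X,U)$ and $(Y,V)$ interchanged, which is legitimate by Lemma~\ref{lemma:Dual}), I get $\beta(X,U)\equiv 1$ and $\alpha(X)\equiv 1$, so $(X,U)$ is equal-volume with $U$ parallel and unimodular. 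The final step is to descend these normalizations from $(X,U)$ to $(x,u)$: the equality $\alpha(X)(i)=[X(i-1),X(i),X(i+1)]$ with $X(i)=(x(i),1)$ expands, after subtracting rows, to $[x(i)-x(i-1),x(i+1)-x(i)]$, so $\alpha(X)\equiv 1$ is exactly the equal-area condition on $x$; similarly $\beta(X,U)(i+\tfrac12)=[X(i),X(i+1),U(i)]$ with $U(i)=(u(i),0)$ reduces to $[x(i+1)-x(i),u(i)]$, so $\beta(X,U)\equiv 1$ is exactly unimodularity of $u$. This finishes the proof.

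The main obstacle I anticipate is purely bookkeeping: making sure the reductions of the $3\times 3$ determinants for the lifting to the $2\times 2$ area forms are done correctly (the row operations that kill the last coordinate), and being careful about which pair plays the role of $(X,U)$ versus $(Y,V)$ when quoting Lemmas~\ref{lemma:BetaDual} and~\ref{lemma:DualUnimodular}, since the statement of Proposition~\ref{prop:AffineCylindricalEV} is naturally phrased from the side of $Y$ while the cleanest hypotheses live on the lifted pair $(X,U)$. There is no deep difficulty here beyond invoking the self-duality in Lemma~\ref{lemma:Dual} to run Lemma~\ref{lemma:BetaDual} in the direction needed.
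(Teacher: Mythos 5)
Your proof is correct and follows essentially the same route as the paper: the ``only if'' direction is exactly the discussion preceding the proposition (lifting plus Lemma~\ref{lemma:DualUnimodular}), and the ``if'' direction is the paper's two-line argument (dualize, transfer unimodularity via Lemma~\ref{lemma:BetaDual}, invoke Proposition~\ref{prop:AffineCylindricalPedal}), which you merely spell out in more detail, including the $3\times 3$ to $2\times 2$ determinant reductions that the paper leaves implicit.
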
 

\begin{proof}
If $(Y,E)$ is unimodular, then its dual $(X,U)$ is also unimodular. By Proposition \ref{prop:AffineCylindricalPedal}, $(X,U)$ is the 
lifting of a planar pair $(x,u)$, with $x$ equal-area and $u$ unimodular. 
\end{proof}

Next proposition gives a characterization of equal-volume polygons of constant curvature:

\begin{Proposition}
Consider an equal-volume polygon $Y$ in $3$-space. Then it has constant curvature if and only if it is the affine cylindrical pedal
of some equal-area planar polygon $x$. 
\end{Proposition}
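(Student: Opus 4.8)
The plan is to prove both implications using the duality machinery and the results of the previous subsection. First I would establish the forward implication: suppose the equal-volume polygon $Y$ has constant curvature with respect to some transversal parallel vector field $V$. By Proposition \ref{prop:ConstantCurvature}, there is a constant vector field $E$ with $V=cY+E$ for some constant $c$. The key point is that this $E$ is transversal to $Y$ (it spans the normal plane together with $Y$), and I need to upgrade it to a \emph{unimodular} constant transversal field. Since $Y$ is equal-volume, Proposition \ref{prop:AffineCylindricalEV} tells me it suffices to produce a unimodular constant transversal vector field; I would argue that a suitable scalar multiple $\tilde E = s E$ (with $s$ a constant) is unimodular, because $\beta(Y,\tilde E)(i) = s\,\beta(Y,E)(i)$ and $\beta(Y,E)$ must itself be constant — indeed $(Y,E)$ has constant curvature (Proposition \ref{prop:ConstantCurvature}, part (3)$\Rightarrow$(2) applied with $E$ playing the role of the parallel field, which it is since it is constant hence $E'\equiv 0 = -0\cdot Y'$), and for an equal-volume polygon a constant-curvature parallel field with $b\equiv 0$ forces $\beta$ constant via the structure equations, or more directly one checks $\beta(Y,E)' = 0$ from $E$ constant and $Y$ equal-volume. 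Rescaling then gives $\beta(Y,\tilde E)\equiv 1$, and Proposition \ref{prop:AffineCylindricalEV} yields that $Y$ is the affine cylindrical pedal of an equal-area planar polygon $x$.

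Conversely, suppose $Y$ is the affine cylindrical pedal of an equal-area planar polygon $x$ (with its unique parallel unimodular transversal field $u = x''$). By the discussion preceding Proposition \ref{prop:AffineCylindricalEV}, the pair $(Y,E)$ with $E=(0,0,1)$ is dual to the lifting $(X,U)$ of $(x,u)$, it is equal-volume and unimodular, and $E$ is a constant vector field transversal to $Y$. Since $E$ is constant it is in particular parallel along $Y$ (its difference quotient vanishes), so by the equivalence $(1)\Leftrightarrow(2)$ (or $(3)\Rightarrow(1)\Leftrightarrow(2)$) of Proposition \ref{prop:ConstantCurvature}, the pair $(Y,E)$ has constant curvature. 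Hence $Y$ — equipped with the parallel transversal field $E$ — has constant curvature, which is exactly what we need. One should note that Proposition \ref{prop:ConstantCurvature} is a statement about the polygon $Y$ together with \emph{some} transversal parallel field, and "constant curvature" for $Y$ as an equal-volume polygon refers to the curvature $b(Y,V)$ of the canonical parallel unimodular field $V$ given by Equation \eqref{eq:Uequalvolume}; I would reconcile these by invoking Proposition \ref{prop:ParallelUnimodularEV}, which says the canonical $V$ and any other parallel unimodular field differ by a constant multiple of $Y$, so constancy of curvature is independent of which such field is used, and in particular $(Y,E)$ constant curvature is equivalent to $(Y,V)$ constant curvature.

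The main obstacle I anticipate is the bookkeeping around \emph{which} transversal vector field is in play and the normalization to unimodularity: the statement "$Y$ has constant curvature" implicitly fixes the canonical parallel unimodular field $V$ of Equation \eqref{eq:Uequalvolume}, whereas the constant field $E$ arising from Proposition \ref{prop:ConstantCurvature} is parallel but a priori only transversal, not unimodular. The crux is therefore the lemma-level observation that for an equal-volume polygon $Y$, if $E$ is a constant vector field transversal to $Y$, then $\beta(Y,E)$ is automatically constant. I would prove this by writing $\beta(Y,E)(i) = [Y(i-\tfrac12), Y(i+\tfrac12), E]$ and computing $\beta(Y,E)'(i+\tfrac12) = [Y(i-\tfrac12), Y(i+\tfrac12), E]' $; expanding the difference and using that $E$ is constant together with $\alpha(Y)\equiv 1$ (equal-volume) collapses the expression to zero after the standard multilinear manipulation. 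Granting this, rescaling $E$ to be unimodular is immediate, and the equivalence with Proposition \ref{prop:AffineCylindricalEV} closes both directions cleanly.
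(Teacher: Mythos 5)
Your overall architecture is the same as the paper's: both directions are routed through Proposition \ref{prop:ConstantCurvature} (existence of a constant field in the normal plane), Proposition \ref{prop:ParallelUnimodularEV} (normalization of parallel unimodular fields), and Proposition \ref{prop:AffineCylindricalEV} (reduction to the existence of a unimodular constant transversal field). The backward direction as you write it is correct. The problem is in the forward direction, where you isolate as ``the crux'' the following lemma: for an equal-volume polygon $Y$, every constant transversal vector field $E$ has $\beta(Y,E)$ constant. That lemma is false, and the multilinear computation you sketch will not collapse to zero. Indeed $\beta(Y,E)(i+\tfrac{1}{2})=\left[Y(i),Y(i+1),E\right]=\left(Y(i)\times Y(i+1)\right)\cdot E$ is linear in $E$; if it were constant in $i$ for every constant transversal $E$ (an open set of vectors), all the vectors $Y(i)\times Y(i+1)$ would have to coincide, forcing the nodes of $Y$ to be coplanar with the origin and contradicting $\alpha(Y)\equiv 1$. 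Concretely, take $n$ equally spaced points on a circle lifted to the plane $z=\lambda$ with $\lambda$ chosen so that $\alpha\equiv 1$; then $E=(\epsilon,0,1)$ is constant and transversal for small $\epsilon$, yet $\beta(Y,E)$ is non-constant. The same example defeats your alternative justification that ``$E$ constant, hence parallel with $b\equiv 0$, forces $\beta$ constant for an equal-volume polygon.''

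The fix is short and is what the paper's proof implicitly does: the constant field supplied by Proposition \ref{prop:ConstantCurvature} is not an arbitrary constant transversal field but one lying in the normal plane of $(Y,V)$, i.e.\ $E=cY+dV$ with $V$ parallel and \emph{unimodular}; hence $\beta(Y,E)(i+\tfrac{1}{2})=d\,\beta(Y,V)(i+\tfrac{1}{2})=d$ identically, since the $cY$ term contributes $c\left[Y(i),Y(i+1),Y(i)\right]=0$. Rescaling by $d^{-1}$ (equivalently, invoking Proposition \ref{prop:ParallelUnimodularEV} to put the field in the form $V+cY$) produces the unimodular constant transversal field required by Proposition \ref{prop:AffineCylindricalEV}. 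With that one-line replacement of your ``crux lemma,'' your argument goes through and coincides with the paper's proof.
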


\begin{proof}
Given an equal-volume polygon $Y$, denote by $V$ a parallel and unimodular transversal vector field. Then Proposition \ref{prop:ConstantCurvature}
says that the pair $(Y,V)$ has constant curvature if and only if there exists a constant vector field $E$ that is also transversal and unimodular, which by 
Proposition \ref{prop:ParallelUnimodularEV} must be of the form $V+cY$, for some constant $c$. By Proposition \ref{prop:AffineCylindricalEV}, this is equivalent
to $Y$ being the affine cylindrical pedal of some planar equal-area polygon $x$. 
\end{proof}

\section{ Application: A $4$ Flattening Points Theorem}

As an application of the centroaffine duality, we shall give a new proof of a $4$ flattening theorem for convex polygons in $3$-space. 
The proof is based on a $4$-vertex theorem for planar polygons described in \cite{Tabach}.

\subsection{Statement of the theorem}

We say that a polygon $X$ is said to be {\it weakly convex} if it lies in the surface 
of its convex hull (\cite[p.201]{Pak}). We shall consider a stronger convexity condition, namely, that some radial projection of the spatial polygon is a planar convex polygon 
(see Figure \ref{fig:Fig1}). 
The following theorem is proved in \cite{Pak} with the hypothesis of weak convexity.

\begin{thm}\label{thm:DiscreteArnold}
Let $X$ be a generic closed polygon in $3$-space such that, for some center $O$, the radial projection of $X$ in a plane is a convex planar polygon.
Then $X$ admits at least $4$ flattening points. 
\end{thm}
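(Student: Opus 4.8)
The plan is to transport the problem to the plane via the duality machinery developed in the earlier sections and then invoke the known four-vertex theorem for planar polygons. First I would fix a center $O$ for which the radial projection of $X$ onto some plane $\pi$ is a convex planar polygon; after an affine change of coordinates we may take $\pi=\{z=1\}$ and $O$ the origin, so that $X$ is locally convex with respect to $O$ and the projection $x(i)\in\R^2$ is a convex polygon. Choosing any transversal parallel vector field $U$ along $X$ (for instance by lifting a parallel field on $x$), I form the dual pair $(Y,V)$ as in Section 3. By Lemma~\ref{lemma:Wparallel} we have $b(Y,V)=-\lambda(X,U)$, and by Proposition~\ref{prop:DualFlatVertex} the flattening points of $(X,U)$ correspond exactly to the vertices of $(Y,V)$ (genericity of $X$ is guaranteed by hypothesis and gives $b(Y,V)'(i)\neq 0$ by the corollary to Proposition~\ref{prop:Coplanarity}). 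So it suffices to show that $(Y,V)$ has at least four vertices.

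Next I would use the affine cylindrical pedal picture of Section~\ref{sec:AffineCylindricalPedal}. Since $x$ is planar and we lifted it to $X=(x,1)$ with $U=(u,0)$, the dual $(Y,V)$ is, up to the normal-plane adjustment of the last lemma of Section~3, the affine cylindrical pedal $(Y,E)$ of $(x,u)$ with $E=(0,0,1)$; changing the transversal field from $V$ to $E=cX+dU$-dual does not affect the vertex count because vertices depend only on the sign changes of $b'$, and $b(Y,\bar V)$ changes only by the parallel-field normalization which rescales $\lambda$ without changing its critical points. Thus the vertices of $(Y,V)$ coincide with sign changes of $b(Y,E)'=-\lambda(x,u)'$, i.e. with the affine vertices of the planar pedal curve $Y$, equivalently with the affine vertices of the planar polygon $x$ (since $\lambda(X,U)$ is computed from the same data as the planar support function of $x$). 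The key geometric input is that the radial projection being \emph{convex} makes the relevant planar polygon genuinely convex, so the planar four-vertex theorem of \cite{Tabach} applies and yields at least four vertices.

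The last step is to quote the planar result: a convex planar polygon (with a parallel transversal vector field) has at least four affine vertices, i.e. at least four sign changes of $b'$ around the cycle. Pulling this back through the chain vertices of $x$ $\leftrightarrow$ vertices of $(Y,V)$ $\leftrightarrow$ flattening points of $(X,U)$ $\leftrightarrow$ flattening points of $X$ (the latter equivalence being Proposition~\ref{prop:CharFlat}, which says flattening points of $X$ do not depend on the choice of transversal parallel field $U$), I conclude that $X$ has at least four flattening points.

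The main obstacle I anticipate is bookkeeping the identification between the flattening/vertex data of the \emph{spatial} dual pair $(Y,V)$ and the affine-vertex data of the \emph{planar} polygon $x$ cleanly: one must check that passing to the cylindrical pedal and changing the transversal vector field from $V$ to the constant $E$ (hence rescaling $\lambda$, equivalently $b(Y,\cdot)$, by a nonvanishing factor and/or adding a multiple of $Y$) genuinely preserves the number and location of sign changes of the curvature, and that the planar curvature function whose vertices are counted by \cite{Tabach} is exactly (a nonzero reparametrization of) $\lambda(x,u)$. A secondary point is to make precise the reduction "radial projection convex $\Rightarrow$ the planar polygon to which we apply \cite{Tabach} is convex," since \cite{Tabach} presumably requires convexity in its own normalization; once that is set up, the proof is essentially a dictionary lookup.
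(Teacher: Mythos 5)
Your overall strategy --- dualize, reduce to Tabachnikov's four-vertex theorem for a planar pair, and pull the vertices back to flattenings via Proposition \ref{prop:DualFlatVertex} --- is exactly the paper's, but the execution has a genuine gap in the middle. First, you write the spatial polygon as $X=(x,1)$, the lift of its convex radial projection. That identification is false: the hypothesis gives $X(i)=\lambda(i)(\gamma(i),1)$ with $\gamma$ convex, and $X$ is genuinely non-planar (if $X$ were $(x,1)$ the statement would be degenerate, since then $\Delta\equiv 0$). The correct use of the projection hypothesis is twofold: it lets you normalize so that $(0,0)$ lies in the interior of $\gamma$, which makes the \emph{constant} field $E=(0,0,1)$ transversal to $X$; and one then dualizes the specific pair $(X,E)$, not $(X,U)$ for an arbitrary parallel $U$ obtained by ``lifting a field on $x$.'' The constancy of $E$ is what forces the dual $Y$ to lie in the plane $z=1$ and $V$ to be horizontal, so that a planar theorem can be invoked at all.

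Second, and more importantly, the planar polygon to which Tabachnikov's theorem must be applied is not the projection $\gamma$ --- which is convex by hypothesis --- but the polygon $y$ with $(Y,V)=\left((y,1),(v,0)\right)$ dual to $(X,E)$. Its convexity is not automatic and is the real geometric content of the proof beyond the formal duality dictionary: the paper's Proposition \ref{prop:ConvexAffineCylindricalPedal} establishes it by an index argument (if the index of $y$ exceeded $1$, so would the index of its co-normal $x=\lambda\gamma$, contradicting that $x$ meets each ray from the origin at most once because $\gamma$ is convex and contains the origin). Your sketch treats this as a ``secondary point'' and in places reads as if the convex polygon fed to \cite{Tabach} were the projection itself; as written, the assertion that ``the relevant planar polygon is genuinely convex'' is exactly the step that is missing. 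You should also record that $v$ is exact with respect to $y$ (immediate from $v'=-b\,y'$), since Tabachnikov's theorem requires exactness; your translation of genericity via Proposition \ref{prop:Coplanarity} is fine.
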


\begin{figure}[htb]
 \centering
 \includegraphics[width=0.40\linewidth]{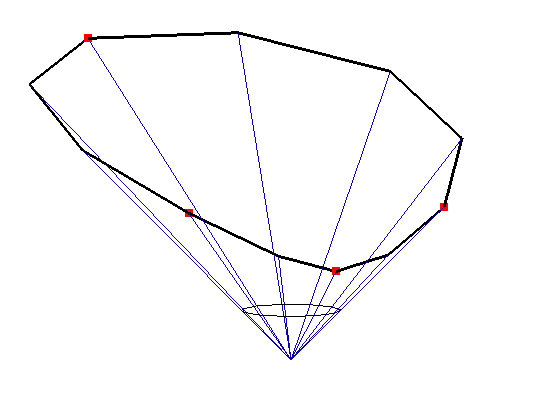}
 \caption{ A spatial polygon whose radial projection is convex and its flattening points. }
\label{fig:Fig1}
\end{figure}

This theorem is a polygonal version of the following well-known Arnold's $4$-flattening points theorem for smooth spatial curves (\cite{Arnold}). Recall that
a flattening point of a smooth curve $c:[a,b]\to\R^3$ is a point $t\in[a,b]$ such that $c'''(t)$ belongs to the osculating plane of $c$ at $t$. 

\begin{thm}\label{thm:Arnold}
Let $c:[a,b]\to\R^3$ be a closed smooth curve such that, for some center $O\in\R^3$, the radial projection of $c$ in a plane is a convex planar curve.
Then $c$ admits at least $4$ flattening points. 
\end{thm}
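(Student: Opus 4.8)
The plan is to derive Theorem~\ref{thm:Arnold} from its polygonal counterpart, Theorem~\ref{thm:DiscreteArnold}, by approximating $c$ with inscribed polygons and passing to the limit. Throughout, set $\sigma(t)=\left[c'(t),c''(t),c'''(t)\right]$, so that the flattening points of $c$ are exactly the zeros of $\sigma$, and a generic flattening point is a transverse zero, i.e.\ a point where $\sigma$ changes sign. Since $c$ is closed, the number of such sign changes is even; hence it suffices to rule out the possibilities that $\sigma$ has no sign change or exactly two. I would argue by contradiction, assuming $\sigma$ changes sign at most twice, and I would first reduce to the generic case in which every zero of $\sigma$ is simple (the degenerate case following by a small perturbation of $c$, under which transverse flattening points persist).

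Next I would discretize. Fix a large integer $n$, put $h=(b-a)/n$ and $t_i=a+ih$, and let $X_n$ be the closed polygon with nodes $X(i)=c(t_i)$. For $n$ large, two hypotheses of Theorem~\ref{thm:DiscreteArnold} hold. First, the radial projection of $X_n$ is a convex planar polygon: its nodes are the radial projections of the $X(i)$, which lie in cyclic order on the convex planar curve that is the radial projection of $c$, and any polygon inscribed in a convex curve is convex. Second, $X_n$ is generic, i.e.\ $\Delta(X_n)(i+\tfrac12)\neq0$ for all $i$; this follows from the asymptotics below away from the finitely many zeros of $\sigma$, and near those zeros it can be arranged by a generic choice of the offset $a$ of the partition. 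Granting this, Theorem~\ref{thm:DiscreteArnold} produces at least four flattening points of $X_n$, that is, at least four sign changes of the sequence $i\mapsto\Delta(X_n)(i+\tfrac12)$ around the cycle.

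The bridge between the two settings is a Taylor expansion. Expanding $c$ about the centre of the four nodes defining $\Delta$, a direct (if lengthy) computation gives
\begin{equation*}
\Delta(X_n)(i+\tfrac12)=-h^{6}\,\sigma(t_{i+\frac12})+O(h^{7}),
\end{equation*}
uniformly in $i$, where $t_{i+\frac12}=a+(i+\tfrac12)h$; only the fact that the leading coefficient has a fixed nonzero sign will be used. Consequently, at each sample point lying in the interior of an arc on which $\sigma$ has constant sign, the sign of $\Delta(X_n)(i+\tfrac12)$ equals $-\operatorname{sign}\sigma(t_{i+\frac12})$ once $n$ is large. Under our assumption $\sigma$ divides the circle into at most two constant-sign arcs, so, away from the transition zones around the zeros of $\sigma$, the discrete sequence $\Delta(X_n)$ is eventually of a single sign or splits into two arcs of opposite sign. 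Provided no further sign changes occur inside the transition zones, $\Delta(X_n)$ then changes sign at most twice, contradicting the four sign changes supplied by Theorem~\ref{thm:DiscreteArnold}; this contradiction proves the theorem.

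The main obstacle is precisely this last proviso: controlling $\Delta(X_n)$ inside the transition zones, where $\sigma$ is comparable to the $O(h^{7})$ error and its sign no longer dictates that of $\Delta$. Since each zero $\theta$ of $\sigma$ is simple, $\sigma(t)=\sigma'(\theta)(t-\theta)+O\!\left((t-\theta)^{2}\right)$, so the region where $|\sigma|=O(h)$ contains only $O(1)$ sample points; one must show that across each such zone $\Delta(X_n)$ crosses $0$ exactly once, creating no spurious sign change. I would handle this either by carrying the expansion one order further and using the strict monotonicity of the leading term near a transverse zero, or, more expediently, by invoking the stability of transverse crossings under the uniform convergence $h^{-6}\Delta(X_n)\to-\sigma$: a transverse zero of the limit forces exactly one nearby sign change of the approximants and no others. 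This secures the bound of at most two discrete sign changes, completing the reduction to Theorem~\ref{thm:DiscreteArnold}.
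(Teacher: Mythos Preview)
The paper does not prove Theorem~\ref{thm:Arnold} at all: it is stated there only as the classical smooth result of Arnold, with a citation, in order to motivate the discrete Theorem~\ref{thm:DiscreteArnold}. So there is no ``paper's proof'' to match; any argument you give is automatically a different route. Your idea of running the implication in the unusual direction (deduce the smooth theorem from the polygonal one via inscribed approximations) is interesting and, as far as I can see, not in the paper or in the cited literature.

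That said, the argument has a genuine gap precisely where you flag it. The expansion $\Delta(X_n)(i+\tfrac12)=-h^{6}\sigma(t_{i+1/2})+O(h^{7})$ is correct, and away from the zeros of $\sigma$ it pins down the sign of $\Delta$. The problem is the transition zones. Your second proposed fix, ``stability of transverse crossings under uniform convergence $h^{-6}\Delta(X_n)\to-\sigma$'', is not valid as stated: uniform $C^{0}$ convergence of a sequence of functions does not control the number of sign changes of the approximants near a simple zero of the limit (one can have arbitrarily many tiny oscillations of size $o(1)$). What you actually need is monotonicity of the discrete sequence $i\mapsto\Delta(X_n)(i+\tfrac12)$ in a fixed neighbourhood of each simple zero of $\sigma$, and for that your first option is the right one: push the Taylor expansion one step further to see that $h^{-6}\Delta(X_n)(i+\tfrac12)=f_h(t_{i+1/2})$ for a smooth family $f_h\to-\sigma$ in $C^{1}$, so that $f_h$ is strictly monotone on a neighbourhood of each simple zero of $\sigma$ for small $h$, giving exactly one discrete sign change there. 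Without this (or an equivalent $C^{1}$ statement), the contradiction with Theorem~\ref{thm:DiscreteArnold} does not close.

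Two smaller points you should also tighten. First, the perturbation ``reduce to simple zeros of $\sigma$'' must be done while preserving both closedness and convexity of the radial projection; this is fine because both conditions are open, but it should be said. Second, the genericity of $X_n$ (no four consecutive nodes coplanar) near the zeros of $\sigma$ really does require an argument: at a sample point with $t_{i+1/2}-\theta=O(h)$ the leading term and the $O(h^{7})$ remainder are of the same order, so $\Delta$ could vanish; your offset trick handles this, but note that the same $C^{1}$ control you need for the transition-zone argument also gives genericity for free, since a strictly monotone discrete sequence cannot hit zero twice.
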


For a discussion of different types of convexity of spatial curves and other smooth $4$ flattening points theorems, see \cite{Uribe-Vargas}.

\subsection{Dual of a convex affine cylindrical pedal}

Assume that $X(i)=(x(i),z(i))$ is a locally convex spatial polygon transversal to $E=(0,0,1)$. By Proposition \ref{prop:AffineCylindricalPedal},  $X$ is the affine cylindrical pedal of a planar polygon $y(i+\tfrac{1}{2})$.

\begin{Proposition}\label{prop:ConvexAffineCylindricalPedal}
If $X(i)=\lambda(i)\left(\gamma(i),1\right)$, with  $\lambda(i)>0$ and $\gamma(i)$ convex containing $(0,0)$ in its interior, then $y(i+\tfrac{1}{2})$ is convex (see Figure \ref{fig:Fig2}).
\end{Proposition}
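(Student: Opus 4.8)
The goal is to show that when the spatial polygon $X$ has the special form $X(i)=\lambda(i)(\gamma(i),1)$ with $\lambda(i)>0$ and $\gamma$ a convex polygon containing the origin in its interior, then the planar polygon $y(i+\tfrac12)$ whose affine cylindrical pedal is $X$ turns out to be convex. My first step is to unwind the definitions: from Proposition \ref{prop:AffineCylindricalPedal} and the construction preceding it, $(X,U)$ is the lifting of a planar pair, its dual pair $(Y,E)$ is a constant curvature pair with $E=(0,0,1)$, and $Y(i+\tfrac12)=(y(i+\tfrac12),z(i+\tfrac12))$ where $y$ is the co-normal vector field. So I need to produce a usable formula for $y(i+\tfrac12)$ in terms of $\gamma$ and $\lambda$. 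Using $\beta(i+\tfrac12)Y(i+\tfrac12)=X(i)\times X(i+1)$ (from the proof of Lemma \ref{lemma:BetaDual}), and plugging in $X(i)=\lambda(i)(\gamma(i),1)$, the cross product $X(i)\times X(i+1)=\lambda(i)\lambda(i+1)\,(\gamma(i),1)\times(\gamma(i+1),1)$. Writing $\gamma(i)=(\gamma_1(i),\gamma_2(i))$, this cross product is $\lambda(i)\lambda(i+1)\bigl(\gamma_2(i)-\gamma_2(i+1),\ \gamma_1(i+1)-\gamma_1(i),\ [\gamma(i),\gamma(i+1)]\bigr)$ where $[\cdot,\cdot]$ is the planar determinant. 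Hence, after normalizing by the third coordinate (which is the key point — $z(i+\tfrac12)\ne 0$ because $(0,0)$ lies in the interior of $\gamma$, so the segment from $\gamma(i)$ to $\gamma(i+1)$ does not pass through the origin, giving $[\gamma(i),\gamma(i+1)]>0$), one gets
\begin{equation*}
y(i+\tfrac12)=\frac{1}{[\gamma(i),\gamma(i+1)]}\bigl(\gamma_2(i)-\gamma_2(i+1),\ \gamma_1(i+1)-\gamma_1(i)\bigr),
\end{equation*}
which is, up to sign and scaling, the \emph{planar affine pedal (co-normal)} of the convex polygon $\gamma$ with respect to its interior point $O$. Note $\lambda$ has dropped out entirely — this is the reason the hypothesis only constrains the direction $\gamma(i)$, not the radial scale.

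**Proving convexity of $y$.** Once the formula above is in hand, the problem reduces to a purely planar statement: if $\gamma$ is a convex polygon containing $O$ in its interior, then the polygon with nodes $y(i+\tfrac12)=J\,\gamma'(i+\tfrac12)/[\gamma(i),\gamma(i+1)]$ (where $J$ is rotation by $\pi/2$ and $\gamma'(i+\tfrac12)=\gamma(i+1)-\gamma(i)$) is convex. Convexity of $y$ means $[y'(i-\tfrac12),y'(i+\tfrac12)]$ does not change sign, equivalently $\alpha(Y)(i+\tfrac12)=[Y(i-\tfrac12),Y(i+\tfrac12),Y(i+\tfrac32)]>0$ for all $i$ (this is local convexity of $Y$ with respect to $O$, which by Lemma \ref{lemma:BetaDual} is automatic, but here I want convexity of the \emph{planar} polygon $y$, which is a slightly different — and stronger — statement and requires the convexity hypothesis on $\gamma$). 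I would prove this by observing that $y(i+\tfrac12)$ is the intersection of the line dual to the edge $\gamma(i)\gamma(i+1)$ — more precisely, $y(i+\tfrac12)$ is the point such that $y(i+\tfrac12)\cdot \gamma(i)=y(i+\tfrac12)\cdot\gamma(i+1)=1$, i.e. the pole of the edge line relative to $O$. Convexity of $\gamma$ around $O$ means the edge lines, when traversed in order, turn monotonically; dually, their poles $y(i+\tfrac12)$ trace out a convex polygon. Concretely: the vertex $\gamma(i)$ lies on both edge lines $(i-\tfrac12)$ and $(i+\tfrac12)$, so $\gamma(i)\cdot y(i-\tfrac12)=\gamma(i)\cdot y(i+\tfrac12)=1$, which says $\gamma(i)$ is the pole of the edge $y(i-\tfrac12)y(i+\tfrac12)$ of $Y$; running this duality twice more and chasing signs of the relevant $2\times 2$ determinants shows $[y'(i-\tfrac12),y'(i+\tfrac12)]$ has constant sign iff the turning angles of $\gamma$ at consecutive vertices all have the same sign, which is exactly convexity of $\gamma$.

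**Main obstacle and a cleaner route.** The delicate point is the sign bookkeeping: converting "$\gamma$ is convex and encircles $O$" into "the determinants $[y'(i-\tfrac12),y'(i+\tfrac12)]$ are all positive" involves being careful about orientation and about the fact that $[\gamma(i),\gamma(i+1)]>0$ throughout (this last is where $O\in\mathrm{int}\,\gamma$ enters). Rather than a brute-force coordinate computation, the cleanest approach is to exploit the duality already built: $(Y,E)$ is dual to $(X,U)$, and $X$ is the lifting of the planar pair $(x,u)$ obtained by radially projecting; so $y$ is precisely the co-normal of $(x,u)$, and there is a well-known planar statement (essentially the affine analogue of the classical fact that the polar dual of a convex body is convex — cf. the planar equal-area/affine-pedal theory in \cite{Craizer-Teixeira-Alvim}) that the co-normal polygon of a convex polygon enclosing $O$ is again convex. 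I would state and use that planar fact, reducing the proposition to checking that $x$, the radial projection of $X$, is convex — but that is immediate because $x(i)$ is the direction of $X(i)$, which by hypothesis is the direction of $(\gamma(i),1)$... wait, $x$ should be taken in an affine chart, and one checks $x(i)$ is a positive reparametrization of $\gamma(i)$, hence convex. The remaining risk is that "convex" for $y$ might need to be interpreted as local convexity with respect to a point versus genuine convexity of the planar polygon as a region; I would verify that under the encircling hypothesis these coincide here, and that the orientation works out so that the sign is $+$ rather than alternating. Apart from that orientation check, every step is a short determinant manipulation.
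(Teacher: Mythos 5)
Your argument is correct in substance, but it takes a genuinely different route from the paper. You compute $y(i+\tfrac12)$ explicitly from $\beta\,Y = X(i)\times X(i+1)$, observe that $\lambda$ cancels, and identify $y$ (up to a sign: the duality conditions $Y\cdot X=0$, $Y\cdot E=1$ actually give $y(i+\tfrac12)\cdot\gamma(i)=y(i+\tfrac12)\cdot\gamma(i+1)=-1$, not $+1$) as the polar dual of $\gamma$ with respect to the interior point $O$; convexity of $y$ then follows from the classical fact that the polar dual of a convex polygon containing the origin in its interior is convex. The paper instead argues by winding number: local convexity of $Y$ is already guaranteed by Lemma \ref{lemma:BetaDual}, so it suffices to show the index of $y$ is $1$; since the index of $y$ equals that of its co-normal $x=\lambda\gamma$, and $x$ meets each ray from the origin at most once (because $\gamma$ is convex with $O$ inside and $\lambda>0$), the index cannot exceed $1$. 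The paper's argument is shorter and avoids the sign/orientation bookkeeping that you correctly identify as the delicate point of the polar-duality route; your argument is more explicit and self-contained, producing a usable closed formula for $y$ in terms of $\gamma$, but it quietly outsources the hard part to the ``well-known'' convexity of polar duals, which if proved from scratch amounts to exactly the determinant sign-chasing you defer. One caution: your closing ``cleaner route'' paragraph conflates $x=\lambda\gamma$ (which is generally \emph{not} convex) with $\gamma$ itself; the polar-duality statement must be applied to $\gamma$, as your explicit formula correctly does, not to $x$.
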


\begin{proof}
Recall that a locally convex planar polygon $y$ is convex if and only if its index is $1$. One can think of the index of a planar 
locally convex polygon as the sum of its external angles divided by $2\pi$.

If the index of $y$ were greater than $1$, 
then the index of its co-normal vector field $x$ would also be greater than $1$. On the other hand, by the convexity of $\gamma$, 
the polygon $x$ intersects each ray from $(0,0)$ at most once, which is a contradiction. 
\end{proof}

\begin{figure}[htb]
 \centering
 \includegraphics[width=0.60\linewidth]{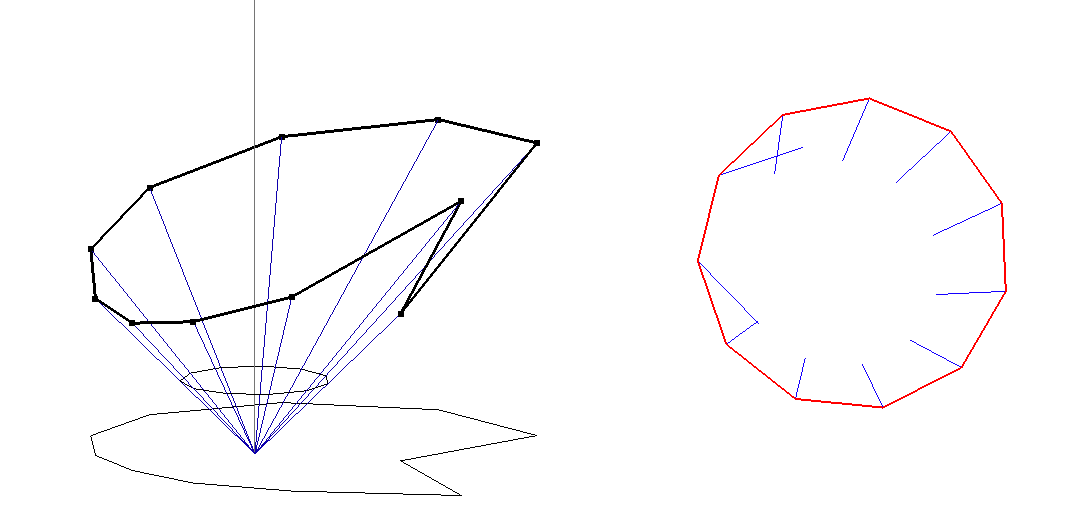}
 \caption{ The polygon $X(i)=(x(i),z(i))=\lambda(i)(\gamma(i),1)$ and its dual pair $(y,v)(i+\tfrac{1}{2})$. }
\label{fig:Fig2}
\end{figure}

\subsection{Proof of Theorem \ref{thm:DiscreteArnold} }

Consider a locally convex polygon $X$ in $3$-space whose projection in a plane with respect to a center $O$ is a convex planar curve.
We may assume that $O=(0,0,0)\in\R^3$ and that the plane of projection is $z=1$. Thus there exist $\lambda(i)>0$ such that
$$
X(i)=\lambda(i)(\gamma(i),1),
$$
where $\gamma(i)$ is a convex planar polygon. We can also assume, w.l.o.g., that $(0,0)$ is contained in the interior of $\gamma$.
This implies that $E=(0,0,1)$ is a transversal vector field along $X$.

Denoting by $(Y,V)$ the dual pair of $(X,E)$, Proposition \ref{prop:ConvexAffineCylindricalPedal} says that we can write $Y=(y,1)$ and $V=(v,0)$, for some planar convex polygon $y$ and parallel planar vector field $v$ along $y$. Moreover, $X=(x,z)$ is the affine pedal of $(y,v)$, i.e., $x$ is the co-normal of $(y,v)$ and
$$
z(i)=x(i)\cdot y(i+\tfrac{1}{2}).
$$

We claim that vertices of $(Y,V)$ correspond to vertices of $(y,v)$ in the sense of \cite{Tabach}. In fact, we can write
\begin{equation}\label{eq:b2}
v'(i)= -b(i)y'(i),
\end{equation}
where $b=b(Y,V)$ is defined in Equation \eqref{eq:Defineb}. This equation implies that $v$ is {\it exact} with respect to $y$  
and that the edge $(i)$ is a vertex of $(y,v)$ if and only if $b'(i-\tfrac{1}{2})\cdot b'(i+\tfrac{1}{2})<0$ (\cite{Tabach}). By Equation \eqref{eq:DefineVertex}, this 
is equivalent to the edge $(i)$ being a vertex of $(Y,V)$.

The vector field $v$ is called generic for $y$ in \cite{Tabach} if no $3$ consecutive normal lines $y(i+\tfrac{1}{2})+tv(i+\tfrac{1}{2})$ intersect at a point. This is equivalent to say that 
no $3$ consecutive lines $Y(i+\tfrac{1}{2})+tV(i+\tfrac{1}{2})$ intersect at a point. By Proposition \ref{prop:Coplanarity}, this is equivalent to the condition that no $4$ consecutive points of $X$ are coplanar, which in fact means that $X$ is generic. 

We are now in position to use the following theorem, proved in \cite{Tabach}:

\begin{thm}
Assume that $y$ is a planar convex polygon and that the generic transversal vector field $v$ is exact. Then the pair $(y,v)$ admits at least $4$ vertices. 
\end{thm}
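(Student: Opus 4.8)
The plan is to recast the statement as a discrete four-vertex theorem for the scalar curvature $b=b(y,v)$ and to prove it, following the approach of \cite{Tabach}, by the classical ``separating line'' argument adapted to the centroaffine-exact setting. Recall that $v$ being exact (parallel) means $v'(i)=-b(i)y'(i)$, and that a vertex of $(y,v)$ is by definition a sign change of $b'$; genericity guarantees that $b'$ never vanishes, so the vertices are precisely the sign changes of the nonvanishing periodic sequence $b'$. The first thing I would record is the closure identity coming from exactness: since $v$ is a genuine periodic vector field, $\sum_i v'(i)=0$, hence $\sum_i b(i)\,y'(i)=0$ as a vector in $\R^2$. Summation by parts (Abel summation, with the boundary terms vanishing by periodicity) then rewrites this as $\sum_j b'(j)\,y(j)=0$ over the nodes $j$, and since also $\sum_j b'(j)=0$ by telescoping, I obtain the key identity
\[
\sum_j b'(j)\,h\bigl(y(j)\bigr)=0
\]
for every affine function $h:\R^2\to\R$.

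The counting argument then proceeds in three steps. First, because $b'$ is a periodic sequence that never vanishes, its number of sign changes --- that is, the number of vertices --- is even. Second, this number cannot be zero: if $b'$ had constant sign it would be a nonzero sequence of a single sign that nonetheless sums to zero, which is impossible. Hence there are already at least two vertices.

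The heart of the proof is to exclude the possibility of exactly two vertices, and this is where convexity of $y$ enters. Suppose $b'$ changed sign exactly twice. Then the nodes split into two cyclically contiguous arcs, one on which $b'>0$ and one on which $b'<0$, the two transitions occurring across two distinct edges of $y$. Since $y$ is convex, a line meeting the boundary only through the interiors of these two transition edges separates the two arcs into the two open half-planes it bounds. Choosing an affine function $h$ positive on the half-plane containing the arc where $b'>0$ and negative on the other, each product $b'(j)\,h(y(j))$ is then strictly positive, so $\sum_j b'(j)\,h(y(j))>0$, contradicting the identity above. Therefore the number of vertices is even, positive, and different from two, hence at least four.

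I expect the main obstacle to be the separation step: one must argue that for a convex polygon the two complementary arcs determined by the sign of $b'$ can be strictly separated by a single line, using that the boundary of a convex polygon meets a line in at most two points and that the two transitions can be placed in the interiors of two distinct edges. Once that elementary geometric fact is secured, the identity forced by exactness converts it directly into the desired contradiction, completing the count.
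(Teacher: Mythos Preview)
The paper does not actually prove this theorem; it simply quotes it as a result of Tabachnikov and cites \cite{Tabach} for the proof. Your argument is correct and is essentially Tabachnikov's original one: Abel-summing the exactness relation $\sum_i b(i)\,y'(i)=0$ gives $\sum_j b'(j)\,y(j)=0$ (together with $\sum_j b'(j)=0$), and then the separating-line / affine-function trick excludes the cases of zero or two sign changes of $b'$.

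One small indexing point worth making explicit (it does not affect correctness): in the paper's convention $y$ has nodes at half-integers and $b'$ is also indexed by half-integers, so the identity reads $\sum_j b'(j+\tfrac{1}{2})\,y(j+\tfrac{1}{2})=0$, and the two sign changes of $b'$ occur at two edges (integer indices) of $y$. The separating line should be taken through interior points of those two edges; for a convex polygon this strictly separates the two arcs of nodes, exactly as you anticipated in your final paragraph.
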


From this theorem, we conclude that $(y,v)$, and hence $(Y,V)$, admits at least $4$ vertices.
By Proposition \ref{prop:DualFlatVertex}, this implies that $X$ admits at least $4$ flattenings, thus completing the proof of Theorem \ref{thm:DiscreteArnold}.

\end{document}